\newtheorem{theorem}{Theorem}[section]
\newtheorem{lemma}[theorem]{Lemma}
\newtheorem{corollary}[theorem]{Corollary}
\newtheorem{proposition}[theorem]{Proposition}
\theoremstyle{definition}
\newtheorem{remark}[theorem]{Remark}
\newtheorem{example}[theorem]{Example}
\def\phi{\varphi}
\def\Z{\mathbb Z}
\def\Q{\mathbb Q}
\def\C{\mathbb C}
\def\T{\mathbb T}
\def\ol{\overline}
\def\int{\operatorname{int}}
\newcommand{\w}{\omega}
\begin{document}

\author[T.~Banakh]{Taras Banakh}
\address{T. Banakh: Ivan Franko National University of Lviv (Ukraine), and Institute of Mathematics, Jan Kochanowski University in Kielce (Poland)}
\email{t.o.banakh@gmail.com}

\author[S.~Bardyla]{Serhii~Bardyla}
\footnote{The work of the second author is supported by the Austrian Science Fund FWF (Grant  I
3709 N35).}
\address{S. Bardyla: Institute of Mathematics, Kurt G\"{o}del Research Center, Vienna, Austria}
\email{sbardyla@yahoo.com}

\author[I.~Guran]{Igor Guran}
\author[O.~Gutik]{Oleg Gutik}
\address{I. Guran, O. Gutik: Faculty of Mathematics, National University of Lviv,
Universytetska 1, Lviv, 79000, Ukraine}
\email{igor-guran@ukr.net, o\underline{\hskip5pt}\,gutik@franko.lviv.ua,
ovgutik@yahoo.com}

\author[A.~Ravsky]{Alex Ravsky}
\address{A. Ravsky: Department of Analysis, Geometry and Topology, Pidstryhach Institute for Applied Problems of Mechanics and Mathematics
National Academy of Sciences of Ukraine,
Naukova 3-b, Lviv, 79060, Ukraine}
\email{alexander.ravsky@uni-wuerzburg.de}

\title[Positive answers to Koch's problem in special cases]
{Positive answers to Koch's problem in special cases}

\keywords{Koch's problem, monothetic semigroup, non-viscous monoid,
topological semigroup, semitopological semigroup, cancellative semigroup,
locally compact semigroup, compact-like semigroup,
sequentially compact semigroup,
countably compact semigroup,
feebly compact semigroup,
countably pracompact semigroup,
Tkachenko-Tomita group}

\subjclass{22A15,54D30}
\begin{abstract}
A topological semigroup is monothetic provided it contains a dense cyclic subsemigroup.
The Koch problem asks whether every locally compact monothetic monoid
is compact. This problem was opened for more than sixty years, till in 2018 Zelenyuk
obtained a negative answer. In this paper we obtain a positive answer for Koch's problem
for some special classes of topological monoids. Namely, we show
that a locally compact monothetic topological monoid is
a compact topological group if and only if $S$ is a submonoid of a quasitopological group if and only if $S$ has open shifts if and only if $S$ is non-viscous in the sense of Averbukh. The last condition means that any neighborhood $U$ of the identity $1$ of $S$ and for any element $a\in S$ there exists a neighborhood $V$ of $a$ such that any element $x\in S$ with $(xV\cup Vx)\cap V\ne\emptyset$ belongs to the neighborhood $U$ of 1.
\end{abstract}
 \maketitle
\section{Introduction}

In this paper by a ``space" means a ``topological space".
Any space in the paper is assumed to be Hausdorff, unless
it satisfies others explicitly stated separation axioms.
For the semigroup operation we use the additive notation when it is necessarily commutative
and the multiplicative notation, otherwise. By a {\em topologized semigroup} we understand a semigroup endowed with a topology.

\subsection{Koch's problem.} A topologized semigroup $S$ is called \emph{monothetic} (cf.,~\cite[p.10]{CHK})
provided it contains a dense cyclic subsemigroup, that is
$S=\overline{\{a^n:n\ge 1\}}$
for some element $a\in S$, which is called a \emph{generator} of the semigroup $S$.
Compact monothetic (semi)topological groups are compact abelian
groups whose character groups are subgroups of the circle (endowed with the discrete topology).
Pontryagin's theorem~\cite{Pon2} states that each locally compact monothetic topological group is
compact. Compact monothetic topological semigroups were described by Hewitt
~\cite{Hew}, see also ~\cite[p.123-130]{CHK}.
In particular, a  locally compact monothetic topological semigroup with a minimal ideal
is compact, see~\cite[Theorem 3.13]{CHK}.
Whether a counterpart of Pontryagin's theorem holds for locally compact monothetic topological monoids
was a well-known problem, posed by Koch~\cite{Koc}.
Remark that if such a monoid is compact, then it is a (topological) group~\cite{Koc},
 \cite{Ell}.
In \cite{Zel} Zelenyuk constructed a countable locally compact cancellative
monothetic topological semigroup which is neither compact, nor discrete. It is easy to see that
a countable locally compact monothetic topological monoid is discrete,
so we cannot attach the identity to Zelenyuk's semigroup.
It took him thirty years to reach a final (negative) answer to Koch's problem in \cite{Zel2}.

On the other hand, Koch's problem has an affirmative answer in some special cases. For instance,
for a totally disconnected monoid $S$, which was known already to Koch~\cite{Koc}.
If $S$ is a group, then $S$ is a topological group by Ellis' Theorem~\cite{Ell2}.
Recently Guran and Kisil \cite{GK} improved this result showing that
a locally compact cancellative monothetic topological monoid with open shifts is
a compact topological group.

In the present paper we obtain positive answers to Koch's Problem
for more general classes of topological monoids. In particular,
we show that a monothetic  topological monoid is cancellative
provided it has open shifts or is non-viscous.

\subsection{Embeddings of semigroups into groups.} Each submonoid of a group is
a cancellative semigroup. On the other hand, it is well-known (see, for instance,
\cite{CP}) that a commutative cancellative semigroup can be embedded
into the group of its quotients.

Unfortunately, this algebraic construction can has no obvious topological counterparts.

The problem of recognizing topological semigroups that embed into topological
groups is a well-known and old research problem. A list of papers published before
1990 is contained in~\cite{Law}. Positive results are known for some special cases, see \cite{McKil}, \cite{Rot}, \cite{LLZ}.

A recent advance~\cite{Ave1}--\cite{Ave4} by Averbukh allowed to find a surprisingly
simple characterization of monothetic topological monoids which can be embedded
into a topological group. Namely, such monoids are cancellative, $T_3$, and non-viscous.
The last property is a simple condition expressed in terms of elements and neighborhoods.
Moreover, this notion turned out to be useful  in obtaining a positive answer to the Koch problem for
a partial case. Namely, a monothetic non-viscous topological monoid $S$ is cancellative
(see Proposition~\ref{prop:non-viscous-cancellativity}) and if $S$ is locally compact,  then
it is a compact topological group, see Theorem~\ref{thm:our-Koch}.

\section{Definitions}

A semigroup $S$ is \emph{cancellative} provided for any elements $a,x,y\in S$, the equality $ax=ay$ or $xa=ya$ implies $x=y$.

A topologized semigroup $S$ is called
\begin{itemize}
\item a \emph{left-topological semigroup} provided for every $a\in S$
the left shift $S\to S$, $x\mapsto ax$, is continuous;
\item a \emph{semitopological semigroup} provided the multiplication
$S\times S\to S$, $(x,y)\mapsto xy$, is separately continuous;
\item a \emph{topological semigroup} if the multiplication $S\times S\to S$ is continuous.
\end{itemize}
A topology on a semigroup $S$ is called a \emph{semigroup topology} on $S$ if it turns $S$ into a topological semigroup.

A topological group $G$ is called
\begin{itemize}
\item a \emph{semitopological group} provided the multiplication
$S\times S\to S$ is separately continuous;
\item a \emph{quasitopological group} if $G$ is a semitopological group with continuous inversion $G\to G$, $x\mapsto x^{-1}$;
\item a {\em paratopological group} if the multiplication $S\times S\to S$ is continuous;
\item a \emph{topological group} if $G$ is a paratopological group and a quasitopological group.
\end{itemize}

A standard example of a paratopological group failing to be a topological group is
the Sorgenfrey line, that is the real line endowed with the Sorgenfrey topology
(generated by the base consisting of half-intervals $[a,b)$, $a<b$).

Since compact spaces have very good properties, they play an important role in general topology.
These properties are so good that even spaces satisfying slightly weaker conditions
are still useful. Therefore, topologists often investigate various classes of compact-like spaces
and relations between them, see, for instance, basic \cite[Chap. 3]{Eng} and
general works \cite{vanDouwenReedRoscoeTree1991}, \cite{Matveev1998},
\cite{VaughanHSTT}, \cite{StephensonJr1984}, \cite{Lipparini2016},\cite{GR2}.
In the present paper we will consider some of compact-like properties.

We recall that a space $X$ is called
\begin{itemize}
\item {\it sequentially compact} if each sequence of $X$ contains
a convergent subsequence;
\item {\it countably compact at a subset} $A$ of $X$ if each infinite
subset $B$ of $A$ has an accumulation point $x$ in the space $X$
(the latter means that each neighborhood of $x$ contains infinitely many points of the set $B$);
\item {\it countably compact} if $X$ is countably compact at itself;
\item {\it countably pracompact} if $X$ is countably compact at a dense subset of $X$;
\item {\it feebly compact} if each locally finite family of open subsets of the space $X$ is finite.
\end{itemize}

Relations between various classes of compact-like spaces are well-studied. Some of them
are presented at Diagram 3 in ~\cite[p.17]{Matveev1998}, at Diagram 1 in
~\cite[p. 58]{Dorantes-AldamaShakhmatov} (for Tychonoff spaces), at Diagram 3.6 in~\cite[p.
611]{StephensonJr1984}, and at Diagram in\cite{GR2}.

In particular, the following inclusions hold.
\begin{itemize}
\item Each compact space is countably compact.
\item Each sequentially compact space is countably compact.
\item Each countably compact space is countably pracompact.
\item Each countably pracompact space is feebly compact.
\end{itemize}

In these terms, a space $X$ is compact if and only if $X$ is countably compact and Lindel\"of.
A Tychonoff space $X$ is feebly compact iff it is pseudocompact, that is
iff each continuous real-valued function on $X$ is bounded.


\section{The non-viscousity}

Following Averbukh \cite{Ave1} we define a topologized monoid $S$ to be {\em non-viscous} if for any neighborhood $U$ of the identity $1$ of $S$ and for any element $a\in S$
there exists a neighborhood $V$ of $a$ such that any element $x\in S$ with $(xV\cup Vx)\cap V\ne\emptyset$ belongs to $U$.

\begin{remark}\label{rem:non-viscous-conditions}
It is easy to see that any non-viscous $T_1$ topologized monoid $S$ satisfies the following Condition:
for any $a,x\in S$ if $ax = a$ or $xa = a$ then $x = 1$.
In particular, the only idempotent of $S$ is the identity.
On the other hand, a commutative monoid generated by two elements $p$ and $q$ with the
only condition $p+q=p$ has only an identity idempotent but does not satisfy Condition.
By ~\cite[Proposition 1.1]{Ave4}, each compact topological monoid satisfying Condition
is non-viscous. Condition also assures that $S$ is non-viscous provided $S$ is discrete.
On the other hand, Condition holds for every cancellative monoid,
so the additive monoid of non-negative real numbers endowed with the usual
topology, but with isolated zero is a viscous locally compact cancellative topological monoid.
On the other hand, the
direct product of additive semigroup $\mathbb{N}$ by the semilattice $(\{0,1\},\min)$
with adjoint identity satisfies Condition but is not cancellative.
Nevertheless, it is easy to check that in a finite monoid whose only idempotent is identity,
each element is invertible, so this monoid is a group, see, for instance, ~\cite[$\S$ 3]{Kur}.
\end{remark}

\begin{lemma}\label{lem:auto-cancellativity} Let $S$ be a monothetic semitopological monoid.
Let $S=\overline{P}$, where $P$ is the semigroup generated by an element $p\in S$.
If  $\overline{x+P}\cap P\ne\varnothing$ for each $x\in S$, then $S$ is cancellative.
\end{lemma}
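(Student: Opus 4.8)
The plan is to exploit the fact that in a Hausdorff semitopological monoid the \emph{equalizer} $E(x,y):=\{z\in S:z+x=z+y\}$ of two elements $x,y$ is closed, being the set on which the continuous maps $z\mapsto z+x$ and $z\mapsto z+y$ agree; consequently, whenever $E(x,y)$ contains a set $A$ it also contains $\overline A$. First I would observe that $S$ is commutative: the powers of $p$ commute pairwise, so each of them lies in the closed set $\{z\in S:zp=pz\}$, which therefore contains $\overline P=S$; feeding this back in shows that any two elements of $S$ commute. This justifies the additive notation and lets me shift copies of $p$ from one side to the other.

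By commutativity it suffices to prove that $a+x=a+y$ implies $x=y$ for all $a,x,y\in S$. So assume $a+x=a+y$. Adding $np$ gives $(np+a)+x=(np+a)+y$ for each $n\ge1$, whence $\{np+a:n\ge1\}\subseteq E(x,y)$ and therefore $\overline{a+P}\subseteq E(x,y)$. Applying the hypothesis to the element $a$, we get some $m\ge1$ with $mp\in\overline{a+P}\subseteq E(x,y)$, i.e. $mp+x=mp+y$; adding further copies of $p$ yields $kp+x=kp+y$ for all $k\ge m$, so $\overline{\{kp:k\ge m\}}\subseteq E(x,y)$. It remains to place the identity $0$ of $S$ inside $\overline{\{kp:k\ge m\}}$, for then $0\in E(x,y)$, which says exactly $x=y$.

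For that last point I would split into two cases. If $0\in P$, say $0=jp$, then $(k+j)p=kp$ for all $k$, so $P=\{p,\dots,jp\}$ is a finite cyclic group; being finite it is closed, so $S=\overline P=P$ is a finite group, which is cancellative, and there is nothing to prove. If $0\notin P$, then for our $m$ the set $F:=\{p,2p,\dots,(m-1)p\}$ is a finite --- hence closed --- subset of $S$ not containing $0$; from $0\in S=\overline P=\overline{F\cup\{kp:k\ge m\}}=F\cup\overline{\{kp:k\ge m\}}$ we conclude $0\in\overline{\{kp:k\ge m\}}$, as needed. The only non-routine ingredient here is the realization that the identity is an accumulation point of every cofinite ``tail'' of the generating cyclic subsemigroup (apart from the trivial finite-order case); the rest is bookkeeping with closed equalizers, and the one point to watch is to instantiate the hypothesis precisely at the element $a$ that one is trying to cancel.
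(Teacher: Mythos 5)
Your proof is correct and follows essentially the same route as the paper's: the equalizer of the two shifts is a closed subsemigroup-ideal containing a translate of $P$, hence by the hypothesis it contains a tail $\{kp:k\ge m\}$ of $P$, and since the identity adheres to that tail, cancellation follows. The only cosmetic differences are that you argue directly rather than by contradiction (the paper instead separates $a+U$ from $b+U$ and finds some $mp\in U$ with $m\ge n$), and that you spell out the commutativity of $S$ and the degenerate case $0\in P$, both of which the paper leaves implicit.
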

\begin{proof}
Suppose to the contrary that there exist elements $a$, $b$, and $x$ of the
semigroup $S$ such that $a\ne b$, but $a+x=b+x$. Since the semigroup $S$ is Hausdorff,
the set $Y=\{y\in S:a+y=b+y\}$ is closed. Since $Y\supset x+P$, there exists a natural
number $n$ such that $np\in Y$. Since $Y$ is an ideal, $mp\in Y$ for each $m\ge n$.
Since $a\ne b$ and $S$ is a Hausdorff semitopological monoid, there exists a neighborhood
$U$ of the identity such that sets $a+U$ and $b+U$ are disjoint.
Since the semigroup $S$ is monothetic, there exist natural $m\ge n$ such that $mp\in U$.
Since $mp\in Y$, $a+mp=b+mp\in (a+U)\cap (b+U)$, a contradiction.
\end{proof}

\begin{corollary} A monothetic semitopological monoid $S$ is cancellative,  if for each $x\in S$ the set
$\overline{x+S}$ has non-empty interior in $S$.
\end{corollary}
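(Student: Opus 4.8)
The plan is to derive this corollary directly from Lemma~\ref{lem:auto-cancellativity}. Since $S$ is monothetic we may write $S=\overline{P}$, where $P$ is the cyclic subsemigroup generated by a generator $p$ of $S$; in particular $P$ is dense in $S$. By the lemma it suffices to verify that the standing hypothesis of the corollary (that $\overline{x+S}$ has non-empty interior in $S$ for every $x\in S$) forces $\overline{x+P}\cap P\ne\varnothing$ for every $x\in S$.

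The first step is to identify the two closures: I claim $\overline{x+S}=\overline{x+P}$ for every $x\in S$. The inclusion $\overline{x+P}\subseteq\overline{x+S}$ is immediate from $P\subseteq S$. For the reverse inclusion, use that $S$ is a semitopological monoid, so the left shift $\lambda_x\colon S\to S$, $y\mapsto x+y$, is continuous; a continuous map sends the closure of a set into the closure of its image, hence $x+S=x+\overline{P}=\lambda_x(\overline{P})\subseteq\overline{\lambda_x(P)}=\overline{x+P}$, and passing to closures gives $\overline{x+S}\subseteq\overline{x+P}$.

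Consequently $\int\overline{x+P}=\int\overline{x+S}$, which by hypothesis is a non-empty open subset of $S$. Since $P$ is dense in $S$, every non-empty open subset of $S$ meets $P$, so $\varnothing\ne\int\overline{x+P}\cap P\subseteq\overline{x+P}\cap P$. Thus the hypothesis of Lemma~\ref{lem:auto-cancellativity} holds, and that lemma yields the cancellativity of $S$.

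I do not expect any genuine obstacle here: the argument is a two-line reduction. The only points demanding attention are the inclusion $x+\overline{P}\subseteq\overline{x+P}$, which uses precisely the separate continuity of the multiplication (continuity of left shifts), and the elementary fact that a dense subset of $S$ intersects every non-empty open subset of $S$.
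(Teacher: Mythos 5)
Your proof is correct and follows essentially the same route as the paper: the paper's one-line remark that $x+P$ is dense in $x+S$ (hence $\overline{x+S}=\overline{x+P}$) is exactly your continuity-of-left-shifts argument, and the final step that the non-empty open interior meets the dense set $P$ is the implicit reduction to Lemma~\ref{lem:auto-cancellativity}. You have merely written out the details the paper leaves tacit.
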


\begin{proof} It suffices to remark that since $P$ is dense in $S$, $x+P$ is dense in
$x+S$ for each $x\in S$, so $\overline{x+S}=\overline{x+P}$.
\end{proof}

\begin{corollary}\label{cor:open-shifts-cancellative}
A monothetic semitopological monoid with open shifts is cancellative.
\end{corollary}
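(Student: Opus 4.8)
The plan is to deduce the statement directly from the preceding corollary, which already reduces cancellativity of a monothetic semitopological monoid $S$ to the single requirement that $\overline{x+S}$ has non-empty interior in $S$ for every $x\in S$. Thus the only work is to check that the open-shifts hypothesis forces $x+S$ itself to be a non-empty open subset of $S$.

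Concretely, I would first recall the meaning of \emph{open shifts}: for every $a\in S$ the translation maps $y\mapsto a+y$ and $y\mapsto y+a$ are open self-maps of $S$. Fixing $x\in S$ and using that $S$ is an open subset of itself, the image $x+S=\{x+y:y\in S\}$ of $S$ under the (open) shift $s_x\colon y\mapsto x+y$ is open in $S$; moreover it is non-empty, since it contains $x+1=x$. Hence $x+S$ is a non-empty open set contained in $\overline{x+S}$, so $\overline{x+S}$ has non-empty interior. Applying the preceding corollary then gives that $S$ is cancellative.

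There is essentially no obstacle here: the corollary is a one-line consequence of the previous one after unwinding the definition of open shifts. The only minor point worth flagging is that a monothetic semitopological monoid $S=\overline{P}$ with $P$ a commutative (cyclic) subsemigroup is automatically commutative — for $a,b\in S$ one approximates by nets in $P$ and uses separate continuity twice to pass from $a_ib_j=b_ja_i$ to $ab=ba$ — which is what justifies the additive notation in the preceding corollary and makes the left and right shifts by $x$ coincide, so that ``open shifts'' indeed yields openness of $x+S$.
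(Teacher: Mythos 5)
Your proof is correct and follows exactly the route the paper intends (the corollary is stated without proof precisely because it is this one-line consequence of the preceding corollary): open shifts make $x+S$ a non-empty open set contained in $\overline{x+S}$, so the hypothesis of the preceding corollary is satisfied. Your side remark that a monothetic semitopological monoid is commutative (closure of a commutative dense subsemigroup via separate continuity and Hausdorffness) is also correct and properly justifies the additive notation and the coincidence of left and right shifts.
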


\begin{proposition}\label{prop:non-viscous-cancellativity} A monothetic non-viscous topological monoid
$S$ is cancellative.
\end{proposition}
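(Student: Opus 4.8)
The plan is to reduce Proposition~\ref{prop:non-viscous-cancellativity}
to Lemma~\ref{lem:auto-cancellativity}. Write $S=\overline{P}$,
where $P=\{p^n:n\ge 1\}$ is the cyclic subsemigroup generated by a
generator $p$ of $S$. By Lemma~\ref{lem:auto-cancellativity}, it suffices
to check that $\overline{xP}\cap P\ne\varnothing$ for every $x\in S$
(I write the operation multiplicatively here, as $S$ need not be known
to be commutative before cancellativity is established; the same argument
works verbatim with additive notation). So fix $x\in S$ and suppose,
towards a contradiction, that $\overline{xP}\cap P=\varnothing$.

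The idea is to exploit non-viscousity at the element $x$ using a
neighborhood of the identity that separates $x$ from a tail of $P$.
First I would observe that, since $1\in S=\overline{P}$ but we want to
keep $x$ away from the relevant shifted set, we should instead work with
a neighborhood $U$ of $1$ chosen so that $xU$ misses a suitable piece of
$P$; more precisely, since the points $p^n$ accumulate at $x$ only in a
controlled way, I would first use that $\overline{xP}\cap P=\varnothing$
to find, for a fixed $p^m\in P$, an open neighborhood separating $p^m$
from the closed set $\overline{xP}$, and then feed the resulting
neighborhood of $1$ into the non-viscousity condition at $a=x$. This
yields a neighborhood $V$ of $x$ such that every $y\in S$ with
$(yV\cup Vy)\cap V\ne\varnothing$ lies in that neighborhood of $1$.
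Because $P$ is dense and the $p^n$ form the generating sequence, infinitely
many $p^n$ lie in $V$; picking two of them, say $p^k,p^l\in V$ with
$k<l$, we get $p^{l-k}\cdot p^k=p^l\in V$, hence
$p^{l-k}V\cap V\ne\varnothing$, so $p^{l-k}$ lies in the chosen small
neighborhood of $1$. Pushing this further, using that arbitrarily large
powers $p^N$ can be obtained as $p^{l-k}$ for suitable choices (or by
iterating and using that $V$ catches a cofinal set of powers), I would
arrange $p^N\in V\cap U'$ for a cofinal set of $N$, where $U'$ is as small
as desired.

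With such powers in hand, I would then derive the contradiction with
$\overline{xP}\cap P=\varnothing$: continuity of the shift $y\mapsto xy$
(here topologicity, or even separate continuity, of the multiplication is
used) together with $p^N\to 1$ along a cofinal set gives $xp^N\to x$, so
$x\in\overline{xP}$; but then, since $x\in\overline{P}$ and
multiplication is jointly continuous, $x=\lim p^{n_i}$ gives
$xp^N=\lim p^{n_i+N}\in\overline{P}$, so $x\in\overline{xP}\subseteq
\overline{P}$ already, and more usefully $\overline{xP}$ meets $P$:
indeed from $xp^N$ close to $x$ and $x$ a limit of powers, a diagonal
argument places some power $p^j$ inside any preassigned neighborhood of a
point of $xP$, contradicting $\overline{xP}\cap P=\varnothing$. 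Thus the
hypothesis of Lemma~\ref{lem:auto-cancellativity} holds and $S$ is
cancellative.

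I expect the main obstacle to be the bookkeeping in the middle step:
converting ``$p^{l-k}$ lies in a small neighborhood of $1$'' into
``a cofinal family of powers $p^N$ lies in every neighborhood of $1$'',
i.e.\ showing that non-viscousity at $x$ forces the generating sequence
to have $1$ as a cluster point in a sufficiently robust way. This is
essentially where one must use that $S$ is a \emph{monoid} (so $1\in S$)
and monothetic (so $P$ is dense and its closure is all of $S$), and it is
the step most likely to need a careful choice of the initial neighborhood
$U$ of $1$ in the non-viscousity hypothesis so that the pigeonhole on
elements of $P\cap V$ produces powers that are genuinely small, rather
than merely lying in $V$. Once that is in place, the contradiction with
$\overline{xP}\cap P=\varnothing$ and the appeal to
Lemma~\ref{lem:auto-cancellativity} are routine.
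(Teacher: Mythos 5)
Your overall strategy---reducing to Lemma~\ref{lem:auto-cancellativity} by verifying $\overline{xP}\cap P\ne\varnothing$ for all $x$---is genuinely different from the paper's proof, which argues directly: assuming $a+x=b+x=y$ with $a\ne b$, it applies non-viscousity at the \emph{product} $y$ to a neighborhood $U$ of $1$ for which $a+U$ and $b+U$ are separated, approximates $a$, $b$, $x$ by powers $n_ap$, $n_bp$, $n_xp$ of the generator, and concludes $(n_b-n_a)p\in U$, contradicting the separation. As written, your route does not go through.

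The middle step you flag as the main obstacle is in fact completable: unless $P$ is finite or $p$ is isolated (cases disposed of by Lemma~\ref{lem:mon-dis}), every open neighborhood $V$ of $p$ contains infinitely many distinct powers of $p$ (otherwise $V\subset\overline{V\cap P}=V\cap P$ would be finite and $p$ isolated), and your difference trick $p^{l-k}p^k=p^l$ then places infinitely many powers of $p$ into any prescribed neighborhood $U$ of $1$; by continuity of $y\mapsto xy$ at $1$ this does give $x\in\overline{xP}$. The genuine gap is the final step: from $x\in\overline{xP}$ and density of $P$ you cannot conclude $\overline{xP}\cap P\ne\varnothing$. Your ``diagonal argument'' produces a power $p^j$ lying in a preassigned neighborhood of a point of $xP$, but $p^j\in\overline{xP}$ requires the reverse quantification, namely that \emph{every} neighborhood of $p^j$ meets $xP$, and nothing you have established forces $xP$ to accumulate at any power of $p$. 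This is precisely the obstruction the paper's other results circumvent: $\overline{xP}\cap P\ne\varnothing$ follows easily when $\overline{xP}$ has nonempty interior (the corollary following Lemma~\ref{lem:auto-cancellativity}), and Lemma~\ref{lem:nvis-open-shifts} secures that only under local compactness, which is not available here. Without a separate argument that non-viscousity alone forces $\overline{xP}$ to meet $P$, the reduction fails; the paper sidesteps the issue entirely by invoking non-viscousity at the common product $a+x=b+x$ rather than at $x$.
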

\begin{proof}
Suppose to the contrary that there exist elements $a$, $b$, and $x$ of the
semigroup $S$ such that $a\ne b$, but $a+x=b+x=y$. Since $S$ is a Hausdorff topological semigroup,
there exists neighborhoods $V_a$ of $a$, $V_b$ of $b$, and $U$ of $1$ such that
the sets $V_a+U$ and $V_b+U$ are disjoint.
Since $S$ is not viscous, there exists a neighborhood $V$ of the point $y$ such that for any  $y_1, y_2\in V$ and
$z\in S$ such that $y_1+z=y_2$ we have $z\in U$.
Since $S$ is a topological semigroup,
there exist neighborhoods $W_a$ of $a$, $W_b$ of $b$, and $V_x$ of $x$ such that
$W_a+V_x\subset V$ and $W_b+V_x\subset V$.
Let $S=\overline{P}$, where $P$ is the semigroup generated by an element $p\in S$.
There exist natural numbers $n_a$, $n_b$ and $n_x$ such that $n_ap\in V_a\cap W_a$,
$n_bp\in V_b\cap W_b$, and $n_xp\in V_x$. Then $n_ap+n_xp\in V$ and $n_bp+n_xp\in V$.
Without loss of generality we may assume that $n_b>n_a$. Then
$(n_bp+n_xp)-(n_ap+n_xp)=(n_b-n_a)p\in U$. So $V_b+U\supset V_b\ni n_bp=n_ap+(n_b-n_a)p\in V_a+U$,
a contradiction.
\end{proof}

\begin{lemma}\label{lem:mon-dis} A $T_1$ monothetic left topological monoid $S$ containing
an isolated point $a$ is a finite group.
\end{lemma}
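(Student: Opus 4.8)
The plan is to show first that the hypotheses force $S$ to be finite, and then that a finite monothetic monoid is automatically a cyclic group. Write $S=\overline{P}$, where $P=\{p^{n}:n\ge 1\}$ is the dense cyclic subsemigroup generated by a generator $p$. Since $a$ is isolated, $\{a\}$ is a nonempty open subset of $S$; as $P$ is dense, $\{a\}\cap P\ne\varnothing$, so $a=p^{n}$ for some $n\ge 1$. Thus the isolated point is a power of the generator.

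Next I would bring in the \emph{left topological} hypothesis. The left shift $\lambda_{a}\colon S\to S$, $x\mapsto ax$, is continuous, so $U:=\lambda_{a}^{-1}(\{a\})=\{x\in S: ax=a\}$ is open in $S$. Since $a1=a$, we have $1\in U$, so $U$ is an open neighbourhood of the identity. Because $S$ is a monoid, $1\in S=\overline{P}$, hence $U$ meets $P$: there is $m\ge 1$ with $p^{m}\in U$, i.e. $ap^{m}=a$, which, using $a=p^{n}$ and associativity, reads $p^{n+m}=p^{n}$.

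From $p^{n+m}=p^{n}$ with $m\ge 1$ one gets $p^{n+km}=p^{n}$ for every $k\ge 0$, so $P=\{p,p^{2},\dots,p^{n+m-1}\}$ is finite. Here the only use of the separation hypothesis enters: in a $T_{1}$ space a finite set is closed, so $P$ is closed and $S=\overline{P}=P$ is finite. Finally, since $S$ is a monoid we have $1\in S=P$, so $1=p^{i}$ for some $i\ge 1$; then $p\cdot p^{i-1}=p^{i}=1$ (with the convention $p^{0}=1$), so $p$ is invertible, and consequently every power $p^{j}$ is invertible with inverse $(p^{i-1})^{j}$. Hence $S$ is a group generated by the single element $p$, i.e. a finite cyclic group, as claimed.

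The one step that needs a small idea, rather than routine bookkeeping, is the second paragraph: recognising that translating by the isolated power $a=p^{n}$ and passing to the fibre $\lambda_{a}^{-1}(\{a\})$ produces an open neighbourhood of $1$, which then cannot avoid the dense set of powers of $p$ and thereby collapses $P$. Everything after that is elementary semigroup theory, and the $T_{1}$ assumption is used only to turn ``$P$ finite'' into ``$S=P$''.
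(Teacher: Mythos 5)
Your proof is correct and follows essentially the same route as the paper: identify the isolated point with a power $p^n$ of the generator, use continuity of the left shift by $p^n$ to get an open neighbourhood $U$ of $1$ with $p^nU=\{p^n\}$, use density of $P$ to find $p^m\in U$, deduce $p^{n+m}=p^n$ and hence that $P$ is finite, and conclude $S=P$ from $T_1$. The only cosmetic difference is that you spell out why a finite cyclic monoid containing $1$ is a group, whereas the paper cites Kurosh for this.
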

\begin{proof} Let $S=\overline{P}$, where $P$ is the smallest semigroup generated by an element
$p\in S$. Then $a=p^n$ for some natural $n$ and $p^nU=\{p^n\}$ for some neighborhood $U$ of
the identity. Pick any $p^m\in U$  with $m>0$. Then $p^{m+n}=p^n$,
which implies that the semigroup $P$ is finite. Since $S$ is a $T_1$ space with a finite dense
set $P$, we see that $S=P$ is a finite monoid. Since $S$ is monothetic, $S$ is a group (see,
for instance,~\cite[$\S$ 3]{Kur}).
\end{proof}

\begin{lemma}\label{lem:nvis-open-shifts} A locally compact non-viscous monothetic topological monoid $S$ has open shifts.
\end{lemma}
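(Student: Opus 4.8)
The plan is to show that every left shift (and, symmetrically, every right shift) of $S$ is an open map. Write $S=\overline{P}$ where $P$ is the cyclic subsemigroup generated by $p$. By Proposition~\ref{prop:non-viscous-cancellativity}, $S$ is cancellative, so every shift is injective; the work is entirely in openness. It suffices to prove that for each $a\in S$ and each open $W\ni 1$ (wlog $W$ has compact closure, using local compactness), the set $a+W$ is a neighborhood of $a$; translating by powers of $p$ then upgrades this to openness of arbitrary shifts on a dense set and, via continuity of shifts, on all of $S$. Actually the cleaner target is: the right shift $\rho_a\colon x\mapsto x+a$ maps $S$ onto a neighborhood of each point $x+a$, and likewise for left shifts.

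The key step will be to combine non-viscousity with local compactness as follows. Fix $a\in S$ and a compact neighborhood $K$ of $1$. Choose, by non-viscousity applied to the identity $1$ (with ``$a$''$:=1$ there), a neighborhood $V$ of $1$ such that $(xV\cup Vx)\cap V\ne\emptyset$ forces $x\in \int K$. Now I want to find a natural number $n$ with $np\in V$ (monotheticity) and track the set $\{mp : m\ge 1\}$ inside $S$: the closure of $\{np, (n+1)p,\dots\}$ is all of $S$ since $P$ is dense and $S$ has no isolated points unless it is a finite group by Lemma~\ref{lem:mon-dis} (in which case every shift is trivially open as $S$ is discrete). So assume $S$ is not discrete. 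For a fixed large $N$, consider the compact set $C:=\overline{\{mp : n\le m\le N\}\cdot K}$; as $N\to\infty$ these sets exhaust $S$. The heart of the argument is to show that some translate $c+W$ with $c\in C$ is forced to absorb a neighborhood of $a$, using that $a$ itself is a limit of powers $n_k p$ and that the differences $(n_k-n_j)p$, once they fall into $V$, are trapped in $\int K$.

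I expect the main obstacle to be the bookkeeping that turns ``differences of powers landing in $V$ are small (lie in $\int K$)'' into ``the shift $x\mapsto x+a$ is open at a single point'', and then propagating openness from one point to all of $S$ without already knowing $S$ is a group. One clean route around the propagation issue: first use the above to show $1$ has a neighborhood base of sets $V$ such that $a+V$ is a neighborhood of $a$ for every $a$ in a fixed compact neighborhood of $1$; then use monotheticity plus continuity of the (already continuous) shifts $\rho_p,\lambda_p$ and the fact that $P$ is dense to slide this property along the orbit $\{mp\}$; finally, since $S=\overline P$ and shifts are continuous, conclude $\rho_a$ and $\lambda_a$ are open for all $a\in S$, i.e.\ $S$ has open shifts. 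A secondary subtlety is the two-sided nature of ``open shifts'': the non-viscousity hypothesis is symmetric in $xV$ and $Vx$, so the same argument handles left shifts verbatim, but one must be careful to run both versions rather than invoking commutativity, which is not yet available at this stage.
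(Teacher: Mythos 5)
There is a genuine gap: your write-up is a plan rather than a proof, and you explicitly defer the ``heart of the argument'' (turning smallness of differences of powers into openness of the shift at a point) without executing it. The step you are missing is that non-viscousity should be applied at the arbitrary point $a$, not at the identity. Doing so gives a neighborhood $V$ of $a$ such that whenever $a_1,a_2\in V$ and $a_1+x=a_2$, then $x\in U$ (where $U$ is a fixed compact neighborhood of $1$). This is exactly what makes the argument close: after discarding the finitely many powers $mp$ with $m<n$ (harmless since $S$ has no isolated points, the discrete case being dispatched by Lemma~\ref{lem:mon-dis}), every $mp\in V\cap P$ with $m\ge n$ satisfies $(m-n)p\in U$, i.e.\ $mp\in np+U$; since $np+U$ is compact, hence closed, and the remaining powers are dense in $V$, one gets $V\subset np+U$ for \emph{every} such $np\in V\cap P$. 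A short compactness argument then forces $V\subset a+U$: if some $v\in V$ avoided $a+U$, then by compactness of $U$ it would avoid $W+U$ for a whole neighborhood $W$ of $a$, contradicting $v\in n'p+U$ for $n'p\in W\cap V$. Your version, which invokes non-viscousity only at $1$ and then tries to exhaust $S$ by compact sets $C=\overline{\{mp:n\le m\le N\}\cdot K}$, does not work: there is no reason these sets exhaust $S$ (no $\sigma$-compactness is available), and more importantly a neighborhood $V$ of $1$ controlling differences near $1$ says nothing about differences of two powers both lying near $a$, which is what you actually need.

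Two smaller remarks. Your closing propagation step (show $z+U$ is a neighborhood of $z$ for $z=y+v$ and a suitable compact $U$ with $v+U\subset V$, then conclude $y+V$ is open) is correct and is exactly how the paper finishes. And your worry about running a separate right-handed argument is unnecessary: a monothetic semitopological monoid is commutative, since the dense cyclic subsemigroup is commutative and multiplication is separately continuous into a Hausdorff space; this is why the paper writes the operation additively throughout.
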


\begin{proof} By Lemma~\ref{lem:mon-dis} it suffices to consider the case when $S$ has no isolated points.

Let $U$ be an arbitrary compact neighborhood of the zero and $a\in S$ be an arbitrary element. Since $S$ is non-viscous, there exists a neighborhood $V$ of the point $a$ such that for any  $a_1, a_2\in V$ and $x\in S$ with $a_1+x=a_2$ we have $x\in U$. Since the semigroup $S$ is monothetic, the set $V\cap P$ is dense in $V$. Let $np\in V\cap P$ be an arbitrary element. Since $S$ has no isolated points,
the set $V'=V\cap P\setminus \{mp:0\le m<n\}$ is dense in $V$, too. Then
$V\subset \overline{V'}=\overline{(V'-np)+np}\subset \overline{U+np}=U+np$.

We claim that $V\subset a+U$. To derive a contradiction, assume that there exists an element $v\in V\setminus(a+U)$. By the compactness of $U$, there exists an open neighborhood $W$ of $a$  such that $v\not\in W+U$. Pick an arbitrary point $n'p\in W\cap V'$.
By the previous paragraph, $v\in V\subset n'p+U\subset W+U$, a contradiction. Thus $a\in V\in a+U$ and $a+U$ is a neighborhood of the point $a$.

Now let $y\in S$ be an arbitrary element and $V\subset S$ be an arbitrary open set. Let $z\in y+V$ be an arbitrary point. Then $z=y+v$ for some point $v\in V$. Pick a compact neighborhood $U$ of the zero such that $v+U\subset V$. Then $z=y+v\in y+v+U=z+U\subset y+V$, and by the previous paragraph the set $z+U$ is a neighborhood of the point $z$.
\end{proof}

Now we can give a partial answer to the Koch problem.

\begin{theorem}\label{thm:our-Koch} For a locally compact monothetic topological monoid $S$, the following conditions are equivalent:
\begin{enumerate}
\item $S$ is a compact topological group;
\item $S$ has open shifts;
\item $S$ is non-viscous.
\end{enumerate}
\end{theorem}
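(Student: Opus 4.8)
The plan is to prove the cycle of implications $(1)\Rightarrow(2)\Rightarrow(3)\Rightarrow(1)$, where the first two implications are essentially free and the third is the substantive one that packages together the lemmas already developed in the paper. For $(1)\Rightarrow(2)$: if $S$ is a compact topological group, then every left or right shift is a homeomorphism of $S$ (being a continuous bijection with continuous inverse $x\mapsto a^{-1}x$), so shifts are in particular open maps. For $(2)\Rightarrow(3)$: a monothetic semitopological monoid with open shifts is cancellative by Corollary~\ref{cor:open-shifts-cancellative}, and every cancellative monoid satisfies the Condition of Remark~\ref{rem:non-viscous-conditions}; to get non-viscousity from open shifts one argues directly — given a neighborhood $U$ of $1$ and a point $a$, use openness of the shift $x\mapsto ax$ to note that $aU$ is a neighborhood of $a$, and then pick $V$ a symmetric-enough neighborhood of $a$ inside $aU$ so that any $x$ with $(xV\cup Vx)\cap V\neq\emptyset$ is forced (via cancellativity and the shift structure) to lie in $U$. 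I expect this to require a little care but no deep idea.

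The heart of the theorem is $(3)\Rightarrow(1)$. Assume $S$ is a locally compact non-viscous monothetic topological monoid with generator $p$, so $S=\overline{P}$ where $P=\{p^n:n\ge 1\}$ (I will switch to additive notation as the paper does). First, Proposition~\ref{prop:non-viscous-cancellativity} gives that $S$ is cancellative. Next, by Lemma~\ref{lem:nvis-open-shifts}, $S$ has open shifts; and by Lemma~\ref{lem:mon-dis} we may assume $S$ has no isolated points (otherwise $S$ is already a finite group and we are done). The goal is then to show $S$ is a compact topological group. The key structural fact to extract is that $0=1$ lies in the closure of the "tails'' $\overline{\{np:n\ge k\}}$ for every $k$, so every neighborhood of $1$ meets arbitrarily late powers of $p$; combined with local compactness one wants to produce a compact neighborhood $U$ of $1$ with $U+U$ still manageable, and then show that some power $np$ lies in $U$ with $np+U$ a neighborhood of $np$, iterating to cover $S$ by finitely many translates of $U$ — i.e. $S$ is compact.

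Concretely, I would run the following steps. (a) Fix a compact neighborhood $U$ of $1$. Since $S$ has open shifts and is monothetic, each translate $np+U$ is a neighborhood of $np$, and $\bigcup_{n}(np+U)\supseteq P$ is dense and open; the complement is closed with empty interior. (b) Use non-viscousity at the point $1$ together with the density of $P$ to show that the set $\{n : np\in U\}$ is \emph{syndetic} in $\mathbb{N}$ (bounded gaps): indeed if it had arbitrarily long gaps one could, after passing to a subnet of powers converging to a point $s$, violate the non-viscous condition or produce an idempotent $\neq 1$. (c) Syndeticity of the return times forces $P\subseteq \{np+U\text{-translates}\}$ with bounded $n$, hence $S=\overline{P}$ is covered by finitely many compact sets $np+U$, so $S$ is compact. (d) A compact monothetic topological monoid that is cancellative (equivalently whose only idempotent is $1$) is a topological group, by the classical results of Koch and Ellis cited in the introduction (\cite{Koc}, \cite{Ell}), or directly: in a compact topological semigroup the closed subsemigroup $\overline{P}=S$ contains an idempotent, which must be $1$, and a compact monoid with a dense monogenic subsemigroup and identity-only idempotent is a group.

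The main obstacle I anticipate is step (b)–(c): turning "non-viscous + locally compact + monothetic + no isolated points'' into genuine compactness, i.e. showing the return times of the orbit $(np)$ into a fixed compact neighborhood of $1$ are syndetic. The danger to rule out is precisely the Zelenyuk-type phenomenon where the orbit "escapes to infinity'' and $S$ fails to be compact; non-viscousity is exactly the hypothesis that should forbid this, so the proof must use it essentially here rather than just to get cancellativity. I would look for a contradiction of the following shape: if the orbit has unbounded excursions outside every fixed compact neighborhood of $1$, then by local compactness choose compact $U\ni 1$ and, along a subnet, let $n_\alpha p\to s\notin$ (some translate), then examine $(n_\alpha - m_\alpha)p$ for a nearby subnet and apply the non-viscous inequality at the limit point to conclude the difference lies in $U$, contradicting the assumed large gap. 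Making this net argument precise — and in particular handling the fact that $S$ need not be first countable — is where the real work lies; Lemma~\ref{lem:nvis-open-shifts} already contains the germ of this argument (its use of density of $V\cap P$ minus an initial segment and the inclusion $V\subseteq U+np$), so I would aim to bootstrap from there.
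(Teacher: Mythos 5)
Your cycle runs in the opposite direction from the paper's ($(1)\Rightarrow(2)\Rightarrow(3)\Rightarrow(1)$ versus the paper's $(1)\Rightarrow(3)\Rightarrow(2)\Rightarrow(1)$), and both of your nontrivial legs have genuine gaps. First, your $(2)\Rightarrow(3)$: you propose a \emph{direct} local argument (``pick $V$ a symmetric-enough neighborhood of $a$ inside $aU$\dots I expect this to require a little care but no deep idea'') that nowhere uses local compactness. No such argument can exist: the Sorgenfrey circle of Example~\ref{exam:Sorgenfrey} is a monothetic paratopological group, hence a monothetic topological monoid in which all shifts are homeomorphisms and in particular open, yet it is viscous (by Proposition~\ref{prop:non-viscous-stg-is-tg}, a non-viscous left-topological group is a topological group, which the Sorgenfrey circle is not). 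So open shifts do \emph{not} imply non-viscousity for monothetic topological monoids; the implication $(2)\Rightarrow(3)$ is only recoverable by going through compactness, i.e.\ through $(1)$, which is exactly what your cycle order prevents you from doing.

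Second, your $(3)\Rightarrow(1)$ is where all the substance lives, and steps (b)--(c) (syndeticity of the return times of $np$ into a compact neighborhood of $1$, hence a finite cover of $S$ by translates) are only sketched; you yourself flag that ``making this net argument precise\dots is where the real work lies.'' That work is precisely the content of Theorem 8 of \cite{GK}, which states that a locally compact cancellative monothetic topological monoid with open shifts is either discrete or a compact topological group. The paper's proof of $(2)\Rightarrow(1)$ simply combines Corollary~\ref{cor:open-shifts-cancellative} (cancellativity is automatic), that cited theorem, and Lemma~\ref{lem:mon-dis} to dispose of the discrete case; the only new analysis in the paper is Lemma~\ref{lem:nvis-open-shifts} (non-viscous $+$ locally compact $\Rightarrow$ open shifts), which handles $(3)\Rightarrow(2)$. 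To repair your proposal with minimal change: prove $(3)\Rightarrow(2)$ via Lemma~\ref{lem:nvis-open-shifts} and Proposition~\ref{prop:non-viscous-cancellativity}, prove $(2)\Rightarrow(1)$ by citing \cite{GK} (or by actually completing your syndeticity argument, which would amount to reproving it), and note that $(1)\Rightarrow(3)$ is immediate since a compact topological group is non-viscous.
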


\begin{proof} The implication $(1)\Rightarrow(3)$ is trivial and  $(3)\Rightarrow(2)$ follows from Lemma~\ref{lem:nvis-open-shifts}. To prove that $(2)\Rightarrow(1)$, assume that $S$ has open shifts. By Corollary~\ref{cor:open-shifts-cancellative}, the monothetic monoid $S$ is cancellative. By Theorem 8 of \cite{GK}, $S$ is either discrete or a compact topological group. If $S$ is discrete, then $S$ is a finite group by  Lemma~\ref{lem:mon-dis}.
\end{proof}

\begin{example}
Averbukh proved~\cite[Theorem 2.2]{Ave4} that
a monothetic topological monoid $S$ can be embedded into a topological group if and only if $S$ is
$T_3$, cancellative, and non-viscous. Proposition~\ref{prop:non-viscous-cancellativity}
implies that the cancellativity holds automatically, but we show that each of the remaining
conditions is essential by providing a counterexample for the case when it is dropped.
\begin{itemize}
\item (\emph{Monotheticity}) Let $S=[0,\infty)$ be the (non-viscous) submonoid
of the Sorgenfrey arrow, see Example~\ref{exam:Sorgenfrey}.
To derive a contradiction, assume that $S$ is a submonoid of a topological group $G$.
Then there exists a neighborhood $U$ of the identity $0$ of $G$ such that
$(1\cdot U)\cap S\subset [1,2)$.
On the other hand, since $G$ is a topological group, there exists a number
$0<\varepsilon<1$ such that $U\supset [0,\varepsilon)^{-1}$.
Then $(1\cdot U)\cap S\supset (1-\varepsilon, 1)$, a contradiction.
\item (\emph{$T_3$}) Let $S=\{0\} \cup[1,\infty)$ be the submonoid of the group of the real numbers.
Define a base of a (semigroup) topology on $S$ by taking at each element $x\in S\setminus\{1\}$
a base $\{(x-\varepsilon,x+\varepsilon)\cap S:\varepsilon>0\}$
of a subspace $S$ of $\mathbb R$ endowed with the standard topology, and at the
element $1$ a base $\{[x,x+\varepsilon)\setminus\{1+1/n:n\in\mathbb N\}:\varepsilon>0\}$.
\item (\emph{Non-viscousity}) Let $S$ be Sorgenfrey circle, see Example~\ref{exam:Sorgenfrey}.
\end{itemize}
\end{example}

We shall call an element $g$ of a semitopological monoid $S$ \emph{topologically periodic},
if for each neighborhood $U$ of the identity of $S$ there exists a natural number $n$
such that $g^n\in U$. A semitopological monoid $S$ is called \emph{topologically periodic},
provided each its element is topologically periodic. It is easy to see that each
(pre)compact topological group $G$ is topologically periodic.

\begin{proposition}\label{prop:non-viscous-cc-stm-is-top-per}
Each non-viscous countably compact topologized monoid $S$ is topologically periodic.
\end{proposition}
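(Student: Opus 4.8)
The plan is to fix an element $g\in S$ and an arbitrary neighborhood $U$ of the identity $1$ of $S$, and to produce a natural number $n\ge 1$ with $g^n\in U$. The engine of the proof is the following use of non-viscosity: it lets us ``detect'' a power of $g$ inside $U$ as soon as two powers $g^m$, $g^k$ with $m<k$ both land in a sufficiently small neighborhood $V$. Indeed, put $x=g^{k-m}$, so that $x\cdot g^m=g^k$. If $g^m\in V$ and $g^k\in V$, then $g^k\in(xV)\cap V$, hence $(xV\cup Vx)\cap V\ne\varnothing$, and by the defining property of the neighborhood $V$ (supplied by non-viscosity applied to $U$ and a suitable base point) we conclude $x=g^{k-m}\in U$; thus $n:=k-m$ works. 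So the whole task reduces to exhibiting such a $V$ together with two powers of $g$ inside it.

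I would split into two cases according to whether the set $A=\{g^n:n\ge 1\}$ is infinite or finite. If $A$ is infinite, then countable compactness of $S$ (that is, countable compactness at $S$ itself) yields an accumulation point $y\in S$ of $A$. Apply non-viscosity to $U$ and the base point $a=y$ to obtain a neighborhood $V$ of $y$; since $y$ is an accumulation point of $A$, the neighborhood $V$ contains infinitely many powers of $g$, in particular $g^m$ and $g^k$ for some $m<k$, and the observation above gives $g^{k-m}\in U$. If $A$ is finite, the pigeonhole principle gives $m<k$ with $g^m=g^k$; now apply non-viscosity to $U$ and the base point $a=g^m$ to obtain a neighborhood $V$ of $g^m$. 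Then $g^m=g^k\in V$ trivially, and with $x=g^{k-m}$ we again have $x\cdot g^m=g^k=g^m\in V$, so $(xV)\cap V\ne\varnothing$ and hence $g^{k-m}\in U$. In either case we obtain the desired power, so $g$ is topologically periodic; as $g$ was arbitrary, $S$ is topologically periodic.

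Note that the argument uses no continuity of the multiplication at all, only the neighborhood-theoretic content of non-viscosity, so it is valid for arbitrary topologized monoids. I expect the only delicate point to be organizational: countable compactness is vacuous when $A$ is finite, so that case must be treated separately, and one must notice that the very same non-viscosity trick still applies there once the repeated power $g^m=g^k$ is produced by pigeonhole. The real content, in both cases, is the recognition that non-viscosity, applied at the right base point (an accumulation point of the set of powers, or a repeated power), is precisely the hypothesis that converts ``two powers in one small set'' into ``a power in $U$''.
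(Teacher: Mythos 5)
Your proof is correct and follows essentially the same route as the paper's: take a cluster point of the powers of $g$, apply non-viscosity at that point to get $V$, find two powers $g^m,g^k$ ($m<k$) in $V$, and conclude $g^{k-m}\in U$. Your explicit treatment of the case where $\{g^n:n\ge1\}$ is finite is a small but welcome refinement, since the paper's appeal to a cluster point of the sequence silently covers that case.
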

\begin{proof} Let $g$ be any element of $S$ and $U$ be any neighborhood of the identity $1$ of $S$.
Let $a$ be a cluster point of the sequence $\{g^n:n\in\mathbb N\}$. Let $V$ be a
neighborhood of $a$ such that for any elements $a_1,a_2\in V$, if
$a_1x = a_2$ then $x\in U$. There exist numbers $m<n$ such that $g^m,g^n\in V$.
Then $g^{m-n}\in U$.
\end{proof}

\begin{proposition}\label{prop:tm-square-cc-is-top-per}
Let $S$ be a topological monoid such that $S\times S$ is countably compact.
Then each left invertible element $g$ of $S$ is topologically periodic.
\end{proposition}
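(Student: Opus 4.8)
The plan is to combine the algebraic identity forced by left invertibility with a cluster point of the ``doubled'' orbit $\bigl((g^n,h^n)\bigr)_{n\in\mathbb{N}}$ inside the countably compact space $S\times S$. Write $h\in S$ for a left inverse of $g$, so that $hg=1$. A one-line induction then gives $h^kg^k=1$ for every $k\ge 1$: indeed $h^{k+1}g^{k+1}=h^k(hg)g^k=h^kg^k$.

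Next I would invoke countable compactness of $S\times S$ to produce a cluster point $(a,b)\in S\times S$ of the sequence $\bigl((g^n,h^n)\bigr)_{n\in\mathbb{N}}$, using the standard fact that every sequence in a countably compact space has a cluster point. The first use of this cluster point is to pin down the product $ba$: the map $\mu\colon S\times S\to S$, $\mu(y,x)=yx$, is continuous (it is the multiplication precomposed with the coordinate swap), hence it carries the cluster point $(b,a)$ of $\bigl((h^n,g^n)\bigr)_{n}$ to a cluster point of the image sequence $\bigl(h^ng^n\bigr)_{n}=(1,1,1,\dots)$; since $S$ is Hausdorff this forces $ba=1$.

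Now fix an arbitrary neighbourhood $U$ of $1$ in $S$. From $ba=1\in U$ and continuity of the multiplication, choose neighbourhoods $W$ of $b$ and $V$ of $a$ with $WV\subseteq U$. Because $(a,b)$ is a cluster point of $\bigl((g^n,h^n)\bigr)_{n}$, the neighbourhood $V\times W$ of $(a,b)$ contains $(g^n,h^n)$ for infinitely many $n$; in particular one can pick indices $m<n$ with $g^m,g^n\in V$ and $h^m,h^n\in W$. Then
\[
g^{n-m}=h^mg^mg^{n-m}=h^mg^n\in WV\subseteq U ,
\]
and $n-m\ge 1$. As $U$ was an arbitrary neighbourhood of $1$, this shows that $g$ is topologically periodic.

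The argument is short, so I do not expect a serious obstacle; the one genuinely new idea (compared with Proposition~\ref{prop:non-viscous-cc-stm-is-top-per}, where only $S$ itself needs to be countably compact) is that one must take the cluster point in $S\times S$ rather than in $S$, so that a single index $n$ simultaneously controls $g^n$ and $h^n$ and one can exploit the cancellation $h^mg^n=g^{n-m}$. Minor points to be careful about are that sequences really do cluster in a countably compact space (treating separately the case in which the orbit takes only finitely many values, where some value simply repeats infinitely often), and that a continuous image of a cluster point of a sequence is again a cluster point of the image sequence.
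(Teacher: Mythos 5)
Your proof is correct and follows essentially the same route as the paper: take a cluster point $(a,b)$ of the doubled orbit in the countably compact square, show the relevant product equals $1$, and then use two indices $m<n$ together with $h^mg^n=g^{n-m}$ to land in the given neighbourhood of the identity. The only cosmetic difference is that you derive $ba=1$ by pushing the cluster point through the (swapped) multiplication map, whereas the paper argues by contradiction with the open set $S\setminus\{1\}$; both arguments are equivalent.
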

\begin{proof} Pick an element $h\in S$ such that $hg=1$. Let $(a,b)$ be a cluster point
of the sequence ${(h^n,g^n)}$.
Assume that $ab\ne 1$. Then $S\setminus \{1\}$ is a neighborhood of
$ab$. There exists neighborhoods $U$ and $V$ of $a$ and $b$, respectively such that
$UV\subset S\setminus\{1\}$. Since $U\times V$ is a neighborhood of $(a,b)$ is $S\times S$,
there exists natural $n$ such that $(h^n,g^n)\in U\times V$. Then $1=h^ng^n\in S\setminus \{1\}$,
a contradiction.
Thus $ab=1$. Let $W$ be any neighborhood of $ab$. There exists neighborhoods $U$ and $V$ of $a$ and $b$,
respectively such that $UV\subset W$. Since $U\times V$ is a neighborhood of $(a,b)$ is $S\times S$,
there exists natural $m<n$ such that $(h^m,g^n)\in U\times V$.
Then $g^{n-m}=h^mg^n\in W$.
\end{proof}

\begin{proposition}\label{prop:top-per-stm-is-cancel}
Each topologically periodic semitopological monoid $S$ is cancellative.
\end{proposition}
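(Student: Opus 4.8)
The plan is to cancel the factor $a$ from $ax=ay$ (and symmetrically from $xa=ya$) by iterating the equation to all powers $a^n$ and then exploiting topological periodicity of $a$ to ``pass to the exponent $0$'', i.e.\ to $1$, using Hausdorffness to guarantee that the relevant set is closed.

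In detail, fix $a,x,y\in S$ with $ax=ay$. First I would check by an immediate induction on $n$, using associativity, that $a^nx=a^ny$ for every $n\ge 1$: indeed $a^{n+1}x=a(a^nx)=a(a^ny)=a^{n+1}y$. Next, consider the set $Z=\{z\in S: zx=zy\}$. Since $S$ is a semitopological monoid, the right shifts $z\mapsto zx$ and $z\mapsto zy$ are continuous, so $Z$ is the preimage of the diagonal of $S\times S$ under the continuous map $z\mapsto (zx,zy)$; as $S$ is Hausdorff, the diagonal is closed and hence $Z$ is closed in $S$. By the previous step $\{a^n:n\ge 1\}\subseteq Z$. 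Finally, because $a$ is topologically periodic, every neighborhood of the identity $1$ meets $\{a^n:n\ge 1\}$, which is precisely the statement $1\in\overline{\{a^n:n\ge 1\}}\subseteq\overline{Z}=Z$. Therefore $1\cdot x=1\cdot y$, that is $x=y$.

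The right-cancellative case is entirely symmetric: from $xa=ya$ one obtains $xa^n=ya^n$ for all $n\ge 1$, the set $Z'=\{z\in S: xz=yz\}$ is closed because now the left shifts $z\mapsto xz$ and $z\mapsto yz$ are continuous and $S$ is Hausdorff, it contains all powers $a^n$, and topological periodicity of $a$ again forces $1\in Z'$, giving $x=y$.

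I do not expect a real obstacle here; the statement is short once one has the right viewpoint. The only points deserving care are that separate continuity suffices — at each stage we use continuity of a single shift only, never joint continuity — and that ``topologically periodic'' is exactly the assertion $1\in\overline{\{a^n:n\ge 1\}}$, which is what allows the closedness of $Z$ (respectively $Z'$) to deliver the conclusion.
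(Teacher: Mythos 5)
Your argument is correct, and it takes a genuinely different (though closely related) route from the paper's. You fix $ax=ay$, propagate the identity to all powers, observe that the equalizer $Z=\{z\in S:zx=zy\}$ is closed because the right shifts by $x$ and $y$ are separately continuous and $S$ is Hausdorff, and then use topological periodicity of $a$ in the form $1\in\overline{\{a^n:n\ge 1\}}\subseteq Z$ to conclude $x=y$; the second half is symmetric. The paper instead argues contrapositively: given $a\ne b$ it picks a neighborhood $U$ of $1$ with $aU\cap bU=\varnothing=Ua\cap Ub$, chooses $n$ with $x^n\in U$ by topological periodicity of $x$, and notes that $ax=bx$ would force $ax^n=bx^n\in aU\cap bU$, a contradiction. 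Both proofs use exactly the same three ingredients (Hausdorffness, separate continuity of shifts, topological periodicity of the element being cancelled), and both are complete. Your closed-equalizer formulation has the merit of running parallel to the technique of Lemma~\ref{lem:auto-cancellativity}, where the analogous set $\{y\in S:a+y=b+y\}$ is also exploited through its closedness; the paper's separation argument is marginally shorter and avoids introducing the equalizer. Your closing observations are accurate: only separate continuity is ever used, and topological periodicity of $a$ is precisely the assertion $1\in\overline{\{a^n:n\ge 1\}}$, which is the single point where the hypothesis enters.
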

\begin{proof} Let $a$ and $b$ be distinct elements of the semigroup $S$ and $x\in S$.
Since $a\ne b$, there exists a neighborhood $U$ of the identity of $S$ such that
$aU\cap bU=\varnothing=Ua\cap Ub$. Since element $x$ is topologically
periodic, there exists a positive integer $n$ such that $x^n\in U$. Then $ax^n\in aU$, and $bx^n\in
bU$, so $ax^n\ne bx^n$. Thus $ax\ne bx$. Similarly we can show that $xa\ne xb$.
\end{proof}

The following example shows that local compactness is essential in Theorem~\ref{thm:our-Koch},
It also concerns~\cite[Theorem 2.2]{Ave4} and shows that a monoid with open
shifts, contained in a topological group, may fail to be a group.

\begin{example}\label{exam:open-shifts} Let $g$ be any non-perodic topologically periodic
element of a topological group $G$. For instance, $g$ can be any non-periodic element
of any (pre)compact topological group $G$. In particular, we
can take for $G$ the unit circle $\T=\{z\in\C:|z|=1\}$ and for $g$ any element of $\T$ such that
$\arg g/\pi$ is irrational. Put $S=\{g^n: n\in\omega\}$. Since $S$ is a subsemigroup
of a topological group, $S$ is a cancellative topological monoid. Moreover,
it is easy to check that each topological group is non-viscous and
a submonoid of a non-viscous monoid is non-viscous.
Since $g$ is topologically periodic, $S$ is monothetic.
Since $g$ is non-periodic, $S$ is not a group. For any element $h\in S$ the set $hS$
has finite complement in $S$ and hence is open in $S$. Let $U$ be any open subset of $S$. Since
a shift in a topological group is a homeomorphism onto its image,
a set $hU$ is open in $hS$, so it is open is $S$ too.
\end{example}

On the other hand, a non-viscous monothetic submonoid of a topological group
needs not have open shifts.

\begin{example}\label{exam:non-open-shifts} Let $s:\T\to\Q$ be a surjective homomorphism,
$S'=\{z\in\T:s(z)>0\}$ and $S=S'\cup\{0\}$. Then $\T=S'-S'$, both sets $S'$ and $-S'$ are dense in $\T$, but
$S'\cap (-S')=\varnothing$. Thus $S$ is a non-viscous monoid which is not open in $\T$.
Since the set $S'$ is uncountable, it contains a non-periodic element, which generates a
cyclic semigroup which is dense in $\T$, so in $S$ too. Hence the monoid $S$
is monothetic. Pick any element $g\in S'$. Then the set $S'\setminus (g+S')=s^{-1}((0,s(g)])$
is dense in $\T$ and in $S$. Thus the set $g+S$ is not open in $S$.
\end{example}

We can generalize Proposition 1.2 from~\cite{Ave4}, which states that a non-viscous paratopological group
is a topological group as follows.

\begin{proposition}\label{prop:non-viscous-stg-is-tg}
A non-viscous left-topological group $G$ is a topological group.
\end{proposition}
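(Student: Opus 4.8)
The plan is to upgrade one-sided continuity to full continuity in stages, using repeatedly that in a left-topological group each left translation $L_a\colon x\mapsto ax$ is a homeomorphism (its inverse $L_{a^{-1}}$ is again continuous), so that a set is a neighborhood of a point $c$ precisely when it contains $cW$ for some neighborhood $W$ of the identity $1$. The first step is to restate non-viscousity in group terms: for every neighborhood $U$ of $1$ and every $a\in G$ there is an open neighborhood $V\ni a$ with $VV^{-1}\subseteq U$ and $V^{-1}V\subseteq U$ --- because $xV\cap V\ne\varnothing$ forces $x=v_2v_1^{-1}$ for some $v_1,v_2\in V$, and $Vx\cap V\ne\varnothing$ forces $x=v_1^{-1}v_2$.

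Next I would show that every inner automorphism $C_a\colon x\mapsto axa^{-1}$ is a homeomorphism. Given a neighborhood $W$ of $1$, choose $V\ni a$ as above for $W$; then $W':=a^{-1}V$ is an open neighborhood of $1$, and since $a\in V$ we get $aW'a^{-1}=Va^{-1}\subseteq VV^{-1}\subseteq W$, so $C_a$ is continuous at $1$. As $C_a$ is an automorphism with $C_a\circ L_g=L_{C_a(g)}\circ C_a$ and the $L$'s are homeomorphisms, continuity propagates to every point; applying the same to $a^{-1}$ identifies the continuous inverse $(C_a)^{-1}=C_{a^{-1}}$. Consequently $R_a=L_a\circ C_{a^{-1}}$ (since $a(a^{-1}xa)=xa$) is a homeomorphism, so $G$ is semitopological. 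Continuity of inversion at $1$ is immediate from the case $a=1$ ($V^{-1}\subseteq V^{-1}V\subseteq U$), and it propagates: for a neighborhood $O$ of $a^{-1}$, continuity of $R_{a^{-1}}$ yields $O_1\ni1$ with $O_1a^{-1}\subseteq O$, then continuity of inversion at $1$ yields $O_2\ni1$ with $O_2^{-1}\subseteq O_1$, so $N:=aO_2$ is a neighborhood of $a$ with $N^{-1}=O_2^{-1}a^{-1}\subseteq O$. Thus $G$ is a quasitopological group.

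It remains to make multiplication jointly continuous, first at $(1,1)$: for a neighborhood $W$ of $1$ pick an open $V\ni1$ with $V^{-1}V\subseteq W$; since inversion is now continuous, $V':=V\cap V^{-1}$ is an open symmetric neighborhood of $1$, and $V'V'=(V')^{-1}V'\subseteq V^{-1}V\subseteq W$. For general $(a,b)$ and an open neighborhood $O$ of $ab$, put $W_0:=(ab)^{-1}O$, an open neighborhood of $1$; then $bW_0b^{-1}$ is a neighborhood of $1$ because $C_b$ is a homeomorphism, so the case $(1,1)$ gives a neighborhood $V\ni1$ with $VV\subseteq bW_0b^{-1}$, and then $A:=aV$ and $B:=Vb$ (a neighborhood of $b$ since $R_b$ is a homeomorphism) satisfy $AB=aVVb\subseteq abW_0=O$. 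Hence $G$ is a paratopological group with continuous inversion, i.e.\ a topological group (which recovers Averbukh's Proposition~1.2 when $G$ is paratopological).

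The main obstacle is the circular entanglement of the three ``missing'' continuities: by homogeneity, continuity of right translations, continuity of inversion, and joint continuity of multiplication each reduce to a statement at $1$, yet these base-point statements feed into one another. The step that breaks the cycle is recognizing that non-viscousity, read correctly, already supplies continuity of every inner automorphism $C_a$ at $1$; with that in hand, right translations, then inversion, then --- via a symmetrization that itself needs continuous inversion --- joint multiplication follow in sequence.
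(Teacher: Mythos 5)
Your proof is correct, but it takes a longer and more structured route than the paper's. The paper's argument is a single direct computation: it shows that the map $\delta\colon(x,y)\mapsto xy^{-1}$ is jointly continuous by applying non-viscousity at the point $y$ to the translated identity neighborhood $yx^{-1}W$, obtaining $V_y\ni y$ with $V_yV_y^{-1}\subset yx^{-1}W$, and then left-translating by $xy^{-1}$ to get $V_xV_y^{-1}\subset W$ with $V_x:=xy^{-1}V_y$ open; continuity of $\delta$ immediately yields the topological group structure. You instead break the circularity by first extracting from non-viscousity the continuity of every inner automorphism $C_a$ at $1$, then deriving in sequence: continuity of right translations via $R_a=L_a\circ C_{a^{-1}}$, continuity of inversion, joint continuity of multiplication at $(1,1)$ via symmetrization, and finally joint continuity everywhere by conjugation and translation. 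Both arguments are sound and use the same two ingredients (the translation $VV^{-1}\subseteq U$, $V^{-1}V\subseteq U$ of non-viscousity, and the fact that left translations are homeomorphisms); the paper's version is more economical, while yours isolates reusable intermediate facts (homeomorphy of the $C_a$'s, hence semitopologicity and quasitopologicity as way stations) and makes visible exactly which half of the non-viscousity condition is needed at each stage.
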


\begin{proof} If suffices to show that the map $\delta:G\times G\to G$, $\delta:(x,y)\mapsto xy^{-1}$, is continuous.
Fix any points $x,y\in G$ and an open neighborhood $W\subset G$ of $xy^{-1}$. Since $G$ is left-topological, $yx^{-1}W$ is an open neighborhood of $yx^{-1}xy^{-1}=1=yy^{-1}$. By the non-viscousity of $G$, the point $y$ has an open neighborhood $V_y$ such that $V_yV_y^{-1}\subset yx^{-1}W$ and hence $(xy^{-1}V_y)V_y^{-1}\subset W$. Since $G$ is left-topological, the set $V_x:=xy^{-1}V_y$ is an open neighborhood of $x$. Since $V_xV_y^{-1}\subset W$, the map $\delta$ is continuous at $(x,y)$.
\end{proof}

\begin{proposition} Let $S$  be a non-viscous topological monoid $S$ such that
$S$ is sequentially compact or $S^{2^\mathfrak c}$ is countably compact.
Then $S$ is a topological group.
\end{proposition}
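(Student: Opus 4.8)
The plan is to reduce to Theorem~\ref{thm:our-Koch} by showing that under either compact-like hypothesis the monoid $S$ is automatically locally compact, and then invoke the equivalence there. The key observation is that a countably compact (and a fortiori sequentially compact) topologized monoid is feebly compact, hence in the presence of good separation/regularity it behaves like a compact space on the relevant local scale; more importantly, the hypotheses force $S$ to be a group.

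First I would use the already-established chain to get cancellativity and topological periodicity: by Proposition~\ref{prop:non-viscous-cc-stm-is-top-per} (applicable once we know $S$ is countably compact, which follows from sequential compactness directly, and from countable compactness of $S^{2^{\mathfrak c}}$ by projecting to a factor) the monoid $S$ is topologically periodic, and hence cancellative by Proposition~\ref{prop:top-per-stm-is-cancel}. Then I would show $S$ is in fact a group: every element $g$ has powers $g^n$ clustering at some point $a$, and picking $m<n$ with $g^m,g^n$ close one gets, using non-viscousity as in Proposition~\ref{prop:non-viscous-cc-stm-is-top-per}, that $g^{n-m}$ lies in an arbitrarily small neighborhood of $1$; combining this with cancellativity and a standard argument (the cluster point $a$ of $\{g^n\}$ satisfies $ga=a\cdot g$ and one extracts a two-sided inverse of $g$ as a cluster point of the ``complementary'' subsequence $g^{k_i-1}$) yields that $g$ is invertible in $S$. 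So $S$ is a cancellative monothetic monoid that is a group.

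Once $S$ is a group, Proposition~\ref{prop:non-viscous-stg-is-tg} (a non-viscous left-topological group is a topological group) upgrades it to a topological group, and then it is a monothetic countably compact — hence precompact — topological group, so it is compact; by Pontryagin's theorem (or simply since a compact monothetic monoid which is a group is a compact topological group) we are done. The point where I expect the real work is the passage ``topologically periodic cancellative monothetic $\Rightarrow$ group'': extracting a genuine inverse from a cluster point requires care about whether multiplication interacts well enough with limits — in the sequentially compact case one can work with honest subsequences, but in the $S^{2^{\mathfrak c}}$-countably-compact case one should instead argue inside the square (or a suitable power), using that $S\times S$ countably compact lets one cluster the pair $(g^n,g^{k-n})$ and read off $g^n\cdot g^{k-n}=g^k$ at the limit, à la Proposition~\ref{prop:tm-square-cc-is-top-per}. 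The bookkeeping of which power of $S$ one needs — and verifying that $S^{2^{\mathfrak c}}$ countably compact indeed gives countable compactness of $S\times S$ and of the relevant countable sub-products — is the main obstacle, but it is soft set-theoretic topology (countable compactness is productive for few enough factors here, and $2^{\mathfrak c}\ge\omega$ suffices to cover the countably many coordinates used).
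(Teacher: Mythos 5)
Your opening moves match the paper: $S$ is countably compact (directly, or as a continuous image of $S^{2^{\mathfrak c}}$ under a projection), hence topologically periodic by Proposition~\ref{prop:non-viscous-cc-stm-is-top-per} and cancellative by Proposition~\ref{prop:top-per-stm-is-cancel}; and the final upgrade from ``group'' to ``topological group'' via Proposition~\ref{prop:non-viscous-stg-is-tg} is also exactly what the paper does. But the middle step --- showing that $S$ is a group --- is a genuine gap, and it is the only hard part of the statement. Your ``standard argument'' of extracting a two-sided inverse of $g$ as a cluster point of a complementary subsequence cannot work from countable compactness of $S$ alone: whether a countably compact cancellative topological semigroup must be a group is the Wallace problem, and Tomita's counterexample (cited in Remark~\ref{rem:Bardyla}) shows it consistently fails. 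So no soft cluster-point bookkeeping in $S\times S$ or in finite powers can close this step. Concretely, the pairing trick of Proposition~\ref{prop:tm-square-cc-is-top-per} \emph{presupposes} a left inverse $h$ of $g$ in order to know that the products $h^ng^n$ are constantly $1$; without that, a cluster point $(a,b)$ of $(g^m,g^n)$ tells you nothing about $ab$ being $1$, and a cluster point of a sequence (unlike a limit of a convergent subsequence) does not let you ``read off'' the product at the limit. The paper instead invokes two nontrivial external results at precisely this point: \cite[Theorem 6]{BG} (a sequentially compact cancellative topological semigroup is already a topological group) and Tomita's theorem from \cite{Tom2} (if $S^{2^{\mathfrak c}}$ is countably compact and $S$ is cancellative, then $S$ is a group). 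The full strength of the hypothesis on the power $S^{2^{\mathfrak c}}$ is needed there; it is not merely a device for getting countable compactness of $S\times S$.

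Two smaller problems: the proposition has no monotheticity hypothesis, so your plan to reduce to Theorem~\ref{thm:our-Koch}, and your later phrase ``$S$ is a cancellative monothetic monoid,'' use an assumption you do not have (and countably compact spaces need not be locally compact in any case). Also, the concluding claims that $S$ is precompact and hence compact are unnecessary for the statement and not justified without monotheticity; once $S$ is known to be a group, Proposition~\ref{prop:non-viscous-stg-is-tg} alone finishes the proof.
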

\begin{proof}
By Proposition~\ref{prop:non-viscous-cc-stm-is-top-per},
$S$ is topologically periodic. By Proposition~\ref{prop:top-per-stm-is-cancel},
$S$ is cancellative.
If $S$ is sequentially compact, then $S$ is a topological group by~\cite[Theorem 6]{BG}.
If $S^{2^\mathfrak c}$ is countably compact, then by Theorem 3.1 and the remark on the beginning of
Section 3.2 of~\cite{Tom2}, $S$ is a group. By~\ref{prop:non-viscous-stg-is-tg}, $S$ is a topological group.
\end{proof}

\begin{example}\label{exam:Sorgenfrey} Let $\mathbb S$ be  the Sorgenfrey line, that is the real
line endowed with the Sorgenfrey topology (generated by the base consisting of half-intervals
$[a,b)$, $a<b$). It is easy to check that a submonoid $S=[0,\infty)$ of $\mathbb S$ is
non-viscous.

On the other hand, let $\mathbb T=\{|z|\in\mathbb C:|z|=1\}$ be the unit circle whose group operation
is the multiplication of complex numbers. Define a map $f:\mathbb S\to\mathbb T$,  $f:x\mapsto e^{2\pi xi}$. It is well-known that $f$ is a homomorphism with the kernel $\mathbb Z$.
Therefore the group $G=\mathbb S/\Z$ endowed with the quotient topology is a paratopological
group (see, for instance,~\cite[Proposition 1.12]{Rav1}). We call it \emph{the Sorgenfrey circle}.
The group $G$ is monothetic. Since $G$ is not a topological group, it is viscous.

Now define an action $\mathbb Z_2\to \operatorname{Aut}(\mathbb T)$ of the two-element group
$\mathbb Z_2$ on $\mathbb T$ by putting $\sigma(1)(z)=z^{-1}$ for each $z\in\mathbb T$. The first
author discovered in~\cite{Ban}  that the semidirect product $H=\mathbb T\ltimes\mathbb Z_2$
naturally endowed with the topology of the Aleksandrow ``two-arrows" space (see, for instance,
~\cite[3.10.C]{Eng}) is a left-topological group. The dense subgroup $\mathbb T\times \{0\}$ of $H$
is can be identified with the Sorgenfrey circle. Any non-periodic element of $H$ generates a dense
subsemigroup in $H$, which implies that $H$ is monothetic, but not abelian. So, $H$ is a monothetic
first-countable compact left-topological group, which is not semitopological. By Proposition
\ref{prop:non-viscous-stg-is-tg}, the left-topological group $H$ is viscous.
\end{example}

\section{Embedded variations}\label{sec:embedded}

To obtain another partial answer to the Koch problem, we need the following

\begin{lemma}\label{lem:new-bardyla} For any topologically periodic element $g$ of a quasitopological group $G$ and $n\in\mathbb Z$, the set $H=\overline{\{g^m:m\ge n\}}$ is a group.
\end{lemma}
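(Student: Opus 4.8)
The plan is to show that the closed subset $H = \overline{\{g^m : m \ge n\}}$ of the quasitopological group $G$ is closed under the group operations inherited from $G$. First I would observe that $H$ is a subsemigroup: since $g^m H \subseteq H$ for all $m \ge n$ (because $g^m \cdot \overline{\{g^k : k\ge n\}} \subseteq \overline{g^m\{g^k:k\ge n\}} \subseteq \overline{\{g^j : j \ge n\}}$ using separate continuity of multiplication), and then by a second closure step $H \cdot H \subseteq H$, again invoking separate continuity of the left and right shifts of $G$. So the real content is that $H$ contains the identity and is closed under inversion.

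For the identity: since $g$ is topologically periodic, for every neighborhood $U$ of $1$ there is $k$ with $g^k \in U$, and by choosing $k$ large (replacing $k$ by a suitable multiple, using that $\{m : g^m \in U\}$ for symmetric $U$ behaves well, or simply noting $g^{k\ell} \to$-frequently lands in $U$) we get $g^m \in U$ for some $m \ge n$; hence $1 \in \overline{\{g^m : m \ge n\}} = H$. More carefully: topological periodicity gives, for each neighborhood $U$ of $1$, arbitrarily large exponents $m$ with $g^m \in U$ — indeed if $g^k \in V$ where $V$ is a neighborhood of $1$ with $V^j \subseteq U$ for the needed power, then $g^{kj} \in U$ — so every neighborhood of $1$ meets $\{g^m : m \ge n\}$, i.e. $1 \in H$.

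For inversion: this is where I expect the main obstacle, and it is exactly where the quasitopological hypothesis (continuity of $x \mapsto x^{-1}$) is used. Given $x \in H$, I want $x^{-1} \in H$. The idea is that $H$ is a compact-like (or at least closed, topologically-periodic-generated) subsemigroup with identity, so one should be able to approximate $x^{-1}$ by positive powers of $g$. Concretely: fix a neighborhood $W$ of $x^{-1}$; by continuity of inversion, $W^{-1}$ is a neighborhood of $x$, so there is $m \ge n$ with $g^m \in W^{-1}$, i.e. $g^{-m} \in W$. It then remains to see that $g^{-m} \in H$, which reduces to showing $g^{-1} \in H$ (then $g^{-m} = (g^{-1})^m \cdot$ — but negative powers need not be "positive powers", so instead: $g^{-m}$ lies in $H$ because $g^{-m} = \lim$ of positive powers). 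To get $g^{-1} \in H$: topological periodicity gives exponents $m_i \to \infty$ with $g^{m_i} \to 1$; then, using separate continuity, $g^{m_i - 1} = g^{-1} g^{m_i} \to g^{-1}$, and for $i$ large $m_i - 1 \ge n$, so $g^{-1} \in H$. Iterating (or directly, $g^{m_i - m} \to g^{-m}$) gives $g^{-m} \in H$ for every $m$, hence $x^{-1} \in \overline{\{g^{-m} : m \ge n\}} \subseteq \overline{H} = H$. Combining, $H$ is a subgroup of $G$, and being closed it carries the subspace quasitopological (in fact, since it is a subgroup of a quasitopological group) structure — which is all that is claimed.

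\textbf{Remark on the delicate point.} The one step to handle with care is passing from "$g^{m_i} \to 1$ along some sequence with $m_i \to \infty$" to conclude $g^{-k} \in H$ for a fixed $k \ge 1$: one writes $g^{-k} = g^{-k} \cdot \lim_i g^{m_i} = \lim_i g^{m_i - k}$ by separate continuity of the shift $y \mapsto g^{-k} y$ on $G$, and discards finitely many terms so that $m_i - k \ge n$. The existence of such $m_i \to \infty$ follows from topological periodicity applied to a decreasing neighborhood base at $1$ (if $G$ is not first countable one argues with nets or directly: for every neighborhood $U$ of $1$ and every $N$, there is $m \ge N$ with $g^m \in U$, since $g^N \cdot g^{k} \in U$ for suitable $k$ when $g^N$ is close enough to $1$ — or simply note $\{g^m : m \ge n\}$ has $1$ as a cluster point). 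This yields $1 \in H$ and $g^{-1} \in H$ simultaneously, and the rest is formal.
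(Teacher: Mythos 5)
Your overall strategy is sound and is essentially the one the paper uses: show that every integer power of $g$ --- in particular $1$ and the negative powers --- lies in $H$, so that $H=\overline{\{g^l:l\in\Z\}}$ is the closure of a subgroup of a quasitopological group and hence a group. Your inversion step (pulling a neighborhood $W$ of $x^{-1}$ back to the neighborhood $W^{-1}$ of $x$, and realizing $g^{-m}$ as a limit of positive powers $g^{m_i-m}$ via the continuous shift $y\mapsto g^{-m}y$) is correct.

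The genuine gap is in the one fact everything else hangs on: that for every neighborhood $U$ of $1$ and every $N$ there is $m\ge N$ with $g^m\in U$, i.e.\ that $1$ is a cluster point of $\{g^m:m\ge N\}$. Your justification picks $V$ with $V^j\subseteq U$; this requires joint continuity of multiplication at $(1,1)$, which a quasitopological group need not have --- multiplication there is only separately continuous, so no such $V$ need exist. Your fallback (``$g^N\cdot g^k\in U$ for suitable $k$ when $g^N$ is close enough to $1$'') is circular: it asks for $g^k$ in the set $g^{-N}U$, which is a neighborhood of $g^{-N}$ rather than of $1$, so topological periodicity does not supply such a $k$. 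The correct repair --- and the reason the paper opens its proof by disposing of the periodic case --- is this: if $g$ is periodic the lemma is trivial, since then $\{g^m:m\ge n\}=\langle g\rangle$ is already a finite group; if $g$ is non-periodic, the powers $g^1,\dots,g^{N-1}$ form a finite closed set not containing $1$ (Hausdorffness), so $U\setminus\{g^1,\dots,g^{N-1}\}$ is again a neighborhood of $1$, and applying topological periodicity to it forces $g^m\in U$ for some $m\ge N$. With this inserted (and noting that your subsemigroup computation $g^m\{g^k:k\ge n\}\subseteq\{g^j:j\ge n\}$ silently assumes $m\ge 0$, which becomes harmless once all integer powers are known to lie in $H$), your argument closes and coincides in substance with the paper's, which at the last step simply quotes the fact that the closure of a subgroup of a quasitopological group is a subgroup.
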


\begin{proof}
If $g$ is periodic then $H$ is a finite group, so we can assume that
$g$ is not periodic. Let $l$ be any integer and $U$ be any neighborhood of an element $g^l$.
Since $1\cdot g^l=g^l$, there exists a neighborhood $V$ of $1$ such that $Vg^l\subset U$.
Since $g$ is topologically periodic, there exists a number $k\ge m-l$ such that $g^k\in V$.
Then $g^kg^l\in \{g^n:n\ge m\}\cap U$. Thus $g^l\in H$. It follows $H$ is the closure of
the subgroup $\{g^l:l\in\Z\}$ in the quasitopological group $G$, so by~\cite[Prop. 1.4.13]{AT}
$H$ is a group.
\end{proof}

\begin{theorem}\label{thm:Bardyla}
Assume that a monothetic topological monoid $S$ is locally compact at the unit $1$. If $S$ is a submonoid of a quasitopological group $G$, then $S$ is a compact topological group.
\end{theorem}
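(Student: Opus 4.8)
The plan is to leverage the machinery already assembled in the excerpt. Write $S=\overline{\{p^n:n\ge 1\}}$ for some generator $p$, and note that since $S$ is a submonoid of the quasitopological group $G$, it is in particular cancellative, and every element of $S$ is invertible in $G$. The key point I would establish first is that $p$ is topologically periodic in $S$. To see this, use local compactness at $1$: pick a compact neighbourhood $K$ of $1$ in $S$. Since $S=\overline{P}$ with $P$ the cyclic semigroup on $p$, the set $K\cap P$ is infinite (otherwise $1$ would be isolated in $S$, and then by Lemma~\ref{lem:mon-dis} $S$ is a finite group and we are done immediately). An infinite subset of the compact set $K$ has a cluster point $a\in K\subset S$; in the quasitopological group $G$, the inversion is a homeomorphism, so for any neighbourhood $U$ of $1$ the set $aU^{-1}\cap a^{-1}U$-type arguments (more simply: if $p^m,p^n\in aV$ for a symmetric-enough neighbourhood $V$ of $1$ with $V^{-1}V\subset U$, then $p^{n-m}=(p^m)^{-1}p^n\in V^{-1}V\subset U$) give powers of $p$ arbitrarily close to $1$. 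Hence $p$ is topologically periodic.

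Once $p$ is topologically periodic, apply Lemma~\ref{lem:new-bardyla} with $n=1$: the set $H=\overline{\{p^m:m\ge 1\}}=\overline{P}=S$ is a group. So $S$ is an (algebraic) group which is a submonoid, hence a subgroup, of the quasitopological group $G$; as a subgroup with the subspace topology it is itself a quasitopological group, in particular a semitopological group. Now $S$ is a monothetic semitopological group which is locally compact at $1$, hence (by translating) locally compact everywhere. At this stage I would invoke the classical fact that a locally compact semitopological group is a topological group — this is Ellis' theorem in the form used already in the paper via \cite{Ell2} — so $S$ is a locally compact monothetic topological group, and then Pontryagin's theorem \cite{Pon2}, quoted in the introduction, forces $S$ to be compact.

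The main obstacle I anticipate is the very first step: extracting topological periodicity of $p$ from local compactness \emph{at the single point} $1$, rather than global local compactness. The cluster point $a$ produced from $K\cap P$ lives in $S$ but need not lie near $1$, so one must pass from "some powers of $p$ accumulate somewhere" to "powers of $p$ accumulate at $1$", and this is exactly where invertibility in $G$ together with continuity of inversion (i.e. the quasitopological hypothesis) is essential — it lets one form $(p^m)^{-1}p^n$ and keep it controlled. A secondary subtlety is confirming that "locally compact at $1$" propagates to local compactness of all of $S$ once we know $S$ is a semitopological group: left translations $x\mapsto ax$ are continuous but perhaps not open a priori, yet in a group they are bijections with continuous inverse $x\mapsto a^{-1}x$, hence homeomorphisms, so compact neighbourhoods of $1$ translate to compact neighbourhoods of $a$. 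With those two points handled, the rest is an assembly of cited results.
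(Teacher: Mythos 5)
Your proof has a genuine gap at its central step. You apply Lemma~\ref{lem:new-bardyla} and then write $H=\overline{\{p^m:m\ge 1\}}=\overline{P}=S$, silently identifying the closure of $P$ \emph{in $G$} (which is what the lemma produces as a group) with the closure of $P$ \emph{in $S$} (which is $S$ by monotheticity). These need not coincide: $S$ is not assumed closed in $G$, so a priori $S\subsetneq H=\overline{P}^{\,G}$, and the lemma only tells you that $H$ is a group, not that $S$ is. (Applying the lemma with the closure taken inside $S$ is not an option either, since $S$ is not known to be a quasitopological group.) Proving $S=H$ is precisely the heart of the paper's argument, and it is where local compactness at $1$ is actually needed: a compact neighborhood $\overline{U}^{\,S}$ of $1$ is closed in $H$, so $S$ contains a neighborhood of $1$ in the group $H$; then for any $h\in H$ the set $hS^{-1}$ is a neighborhood of $h$ in $H$ (here continuity of inversion and of translations in the quasitopological group $H$ is used), it meets the dense set $S$, and hence $h\in SS=S$. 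Your proposal never performs this step, so it never establishes that $S$ is a group; everything after that point (Ellis, Pontryagin) is conditioned on an unproved claim.

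A secondary problem is your derivation of topological periodicity of $p$. First, it is unnecessary: since $S$ is a monothetic \emph{monoid}, $1\in S=\overline{\{p^n:n\ge 1\}}$, so every neighborhood of $1$ already meets $\{p^n:n\ge 1\}$ — that is the definition of topological periodicity. Second, the argument you sketch is invalid in a quasitopological group: to find a neighborhood $V$ of $1$ with $V^{-1}V\subset U$ you need joint continuity of multiplication (together with continuity of inversion), whereas a quasitopological group only has separately continuous multiplication; so the step $p^{n-m}=(p^m)^{-1}p^n\in V^{-1}V\subset U$ cannot be set up this way. Your remaining steps (translations in a semitopological group are homeomorphisms, Ellis' theorem, Pontryagin's theorem) are fine once $S$ is known to be a locally compact group, but the proof as written does not get there.
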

\begin{proof}
Let $g$ be a generator of the monoid $S$ and $H=\overline{\{g^m:m\ge 1\}}^G$.
By Lemma~\ref{lem:new-bardyla}, $H$ is a (quasitopological) group. It is easy to see that
$\ol{S}^G=H$. Let $U$ be an open (in $S$) neighborhood of the identity
such that $\ol{U}^S$ is compact. Then $S\supset \ol{U}^S=\ol{U}^H$ is a neighborhood
of the identity in $H=\ol{S}^H$. Let $h\in H$ be any element. Since $H$ is a
quasitopological group, a set $hS^{-1}$ is a neighborhood in $H$ of the element $h$,
so $hS^{-1}\cap S\ne\varnothing$, thus $h\in SS=S$.
Therefore $H=S$ is both a quasitopological group and a topological semigroup,
that is, a topological group. Since $H$ is locally compact, it is compact by the Pontryagin
alternative.
\end{proof}

Remark that Theorem~\ref{thm:Bardyla} generalizes~\cite[Cor.1.15]{Ave1} to quasitopological
groups and that by~\cite{BGR2} every locally compact topological group $G$ is  closed in any quasitopological group which contains $G$ as a subgroup.

\begin{corollary}\label{cor:Bardyla} Assume that a topologically periodic topological
monoid $S$ is locally compact at the unit $1$.
If $S$ is a submonoid of a quasitopological group $G$, then $S$ is a topological group.
\end{corollary}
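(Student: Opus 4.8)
The plan is to derive Corollary~\ref{cor:Bardyla} from Theorem~\ref{thm:Bardyla} by applying the latter to the closed monothetic submonoids of $S$ generated by single elements. Observe first that it suffices to show that $S$ is algebraically a group: the multiplication of $S$ is already continuous, $S$ carries the subspace topology of $G$, and for $s\in S$ the inverse $s^{-1}$ taken in the group $G$ is automatically a two-sided inverse of $s$ in the monoid $S$, so inversion on $S$ is merely the restriction of the continuous inversion of the quasitopological group $G$; hence $S$ is a topological group. Thus the whole problem reduces to proving that every $g\in S$ is invertible in $S$.

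To this end I would fix $g\in S$ and put $T_g:=\overline{\{g^m:m\ge 1\}}^{\,S}$. As the closure of a subsemigroup of a topological semigroup, $T_g$ is a subsemigroup of $S$; and since $g$ is topologically periodic, every neighborhood of the unit $1$ of $S$ contains a power $g^m$ with $m\ge 1$, so $1\in T_g$. Hence $T_g$ is a monothetic topological submonoid of $S$ with generator $g$, and $T_g\subseteq S\subseteq G$. It remains to check that $T_g$ is locally compact at $1$: choosing an open (in $S$) neighborhood $U$ of $1$ with $\overline{U}^{\,S}$ compact, the set $U\cap T_g$ is an open neighborhood of $1$ in $T_g$, and since $T_g$ is closed in $S$ its closure inside $T_g$ coincides with its closure inside $S$, which is a closed subset of the compact set $\overline{U}^{\,S}$ and therefore compact. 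Now Theorem~\ref{thm:Bardyla} applies to $T_g$ and yields that $T_g$ is a compact topological group; in particular $g$ has an inverse in $T_g\subseteq S$.

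Since $g\in S$ was arbitrary, $S$ is a group, and by the first paragraph $S$ is then a topological group. I do not anticipate a serious obstacle: the statement is essentially a localized version of Theorem~\ref{thm:Bardyla}, and the only steps requiring care are the two passages to the closed submonoid $T_g$ — that topological periodicity forces $1$ into the closure of $\{g^m:m\ge 1\}$, and that local compactness of $S$ at the unit descends to $T_g$ — both of which are elementary once one notes that closures of subsets of the closed set $T_g$ are computed the same way in $T_g$ and in $S$.
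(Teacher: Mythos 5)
Your proof is correct and follows essentially the same route as the paper: for each $g\in S$ one applies Theorem~\ref{thm:Bardyla} to the smallest closed subsemigroup of $S$ containing $g$ (which is a monothetic monoid, locally compact at $1$, sitting inside the quasitopological group $G$), concludes that $g$ is invertible in $S$, and then gets continuity of inversion by restricting it from $G$. Your write-up is in fact slightly more careful than the paper's, since you make explicit why topological periodicity puts $1$ into $\overline{\{g^m:m\ge1\}}$ and why local compactness at $1$ descends to this closed submonoid.
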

\begin{proof} Let $a$ be an arbitrary element of $S$. Then $S_a$, the smallest
closed subsemigroup of $S$ containing $a$ is a monothetic topological monoid,
whose identity has a neighborhood with compact closure. Since $S_a$ is contained
in a quasitopological group, by Theorem ~\ref{thm:Bardyla}, it is a group.
In particular, $S_a$ (and so, $S$) contains
the inverse of $a$, so $S$ is a group, see~\cite[$\S$ 3]{Kur}.
Thus $S$ is a paratopological group with continuous inversion, because $S$ is a
subgroup of a quasitopological group.
\end{proof}

A counterpart of Theorem~\ref{thm:Bardyla} fails for $T_1$ case, as shows the following
example.

\begin{example} An additive group $\Z$ endowed with the cofinite topology is a $T_1$
compact quasitopological group containing a monothetic compact topological semigroup $S$
of non-negative integers. The monoid $S$ cannot be embedded in a $T_1$ topological
group, because $S$ is not Hausdorff.
\end{example}

\begin{remark}\label{rem:Bardyla} Theorem~\ref{thm:Bardyla} suggests a question whether
we can weaken the local compactness at 1 to the (local) countable or feeble compactness.
Our answers to this question are the same as the answers to a known question when a compact-like cancellative topological semigroup $S$ is a group.

Bokalo and Guran \cite{BG} proved that each sequentially compact cancellative topological semigroup is a topological group.

For pseudocompact case there is a relatively simple counterexample.
Let $H$ be the Tychonoff product of an uncountable family of compact topological groups,
$G$ be the $\Sigma$-product contained in $H$. Assume that $H$ is monothetic and pick an element $g\in H$
such that the subgroup of $H$ generated by $g$ is dense in $H$. Clearly, $g\not\in G$.
Since $G$ is a dense countably compact subset
of $H$, the subsemigroup $S$ of $H$ generated by $G$ and $g$ is countably pracompact, so it
is pseudocompact. Since $S\ni g$, it is easy to see that the subsemigroup $S$ is monothetic.
By Exercise 9.6.f from~\cite{AT}, a compact abelian group $G$ is monothetic iff
the dual group $G^*$ is isomorphic to a subgroup of the group $\mathbb T$ endowed with the
discrete topology.
We can also directly show that a group $\mathbb T^\kappa$ endowed with the Tychonoff product topology
is monothetic for each $\kappa\le\mathfrak c$. It is well-known that the real line considered as the linear space over the field $\mathbb Q$ has a Hamel basis of cardinality continuum and hence contains a linearly independent subset $T=\{t_\alpha:\alpha<\kappa\}$ consisting of pairwise distinct real numbers $t_\alpha$, $\alpha<\kappa$.
Put $g=(g_\alpha)=(e^{2\pi t_\alpha i})\in \mathbb T^\kappa$. By~\cite[Example 65]{Pon},
for each sequence $t_{\alpha_1},\dots, t_{\alpha_r}$ of mutually different elements of $T$,
each sequence $d_1,\dots, d_r$ of real numbers, and each $\varepsilon>0$
there exist a sequence $n_1,\dots, n_r$ of integer numbers and an integer number $m$ such
that $|mt_{\alpha_i}-d_i-n_i|<\varepsilon$ for each index $i=1,\dots, r$.
This implies that the set of powers of $g$ is dense in the group $\mathbb T^\kappa$.

The countably compact case is much more subtle.

Denote by TT the following axiomatic assumption: there is an infinite torsion-free abelian
countably compact topological group without non-trivial convergent sequences. An example of such a group was constructed by M.~Tkachenko \cite{Tka}
under the Continuum Hypothesis. Later, the Continuum Hypothesis weakened  to the
Martin Axiom for $\sigma$-centered posets by Tomita in \cite{Tom2}, for countable posets in
\cite{KTW}, and finally to the existence continuum many incomparable selective
ultrafilters in \cite{MT}. Yet, the problem of the existence of a countably compact
group without convergent sequences in ZFC seems to be open,  see \cite{DS}.

The proof of \cite[Lemma 6.4]{BDG} implies that under TT there exists a
group topology on a free abelian group $F$ generated by the cardinal $\mathfrak c$
such that for each countable infinite subset $M$ of the group $F$ there exists an element
$\alpha \in \overline M\cap\mathfrak c$ such that $M\subset\langle \alpha \rangle$.
Let $S_0\subset F$ be the free abelian semigroup generated by the set $\mathfrak c$. The
mentioned property of the group $F$ implies that $S_0$ is countably compact. Let $g\in S_0$ be an
arbitrary non-zero element and $S$ be the closure in $S_0$ of the semigroup generated by $g$.
Since the group $F$ is countably compact, it is precompact and so topologically
periodic. Then the element $g$ is topologically periodic.
Thus $S$ is a countably compact monothetic topological monoid, which is
contained in a topological group, but which is not a group, because
$-g\in F\setminus S_0\subset F\setminus S$.

On the other hand, Tomita in~\cite{Tom2} proved that if the power $S^{2^{\mathfrak c}}$ of a
cancellative topological semigroup $S$ is countably compact
then $S$ is a group. As far as we know, a question whether there exists a
cancellative topological semigroup $S$ such that $S$ (or $S\times S$) is countably compact,
but $S$ is not a group, is still unanswered under ZFC.

By $\mathfrak u$ we denote the smallest character on a free ultrafilter on $\omega$ and by $\mathfrak p$ the smallest character of a free filter $\mathcal F$ on $\w$ that has no infinite pseudointersection (which is an infinite subset $I\subset\w$ such that $I\setminus F$ is finite for every $F\in\mathcal F$). It is known \cite[6.15]{Blass} that $\mathfrak p\le \mathrm{cf}(\mathfrak c)$ and $\omega_1\le \mathfrak p\le \mathfrak u\le\mathfrak c$. Martin's Axiom implies $\mathfrak p=\mathfrak u=\mathfrak c$ (more precisely, $\mathfrak p=\mathfrak c$ is equivalent to MA${}_{\sigma\mbox{\tiny-centered}}$, the Martin Axiom for $\sigma$-centered posets, see \cite[7.12]{Blass}).

A topological space is called {\em $\kappa$-compact} for a cardinal $\kappa$ if every its open cover of size $\le\kappa$ has a finite subcover.
In~\cite{Tom2} Tomita proved that under $\mathfrak u=\w_1$, each Tychonoff $\omega_1$-compact cancellative
topological semigroup is a group. On the other hand, he showed that
under $\mathfrak c=\mathfrak p>\omega_1$ (which is equivalent to MA${}_{\sigma\mbox{\tiny-centered}}+\neg$CH), the topological group
$\T^{\mathfrak c}$ contains an $\omega_1$-compact subsemigroup $S_0$, which is
not a group. Thus $S_0$ contains a non-periodic element $g$.
Let $S$ be the closure in $S_0$ of the semigroup generated by $g$.
Since the group $\T^{\mathfrak c}$ is compact, it is topologically periodic.
Then the element $g$ is topologically periodic.
Thus $S$ is a $\omega_1$-compact monothetic topological monoid, which is
contained in a topological group, but which is not a group, because
$-g\in \T^{\mathfrak c}\setminus S_0\subset \T^{\mathfrak c}\setminus S$.
Therefore the answer to our question
for Tychonoff $\omega_1$-compact semigroups is independent of ZFC.
\end{remark}

Theorem 4.6(i) in~\cite{Rup} implies the following
\begin{lemma}\label{lem:c-subgroup} A subgroup $G$ of a compact semitopological semigroup $S$ is
a topological group.
\end{lemma}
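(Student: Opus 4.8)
The plan is to carry out the small algebraic reduction signalled by the phrase ``Theorem~4.6(i) in~\cite{Rup} implies'' and then quote that theorem. Let $e\in G$ be the identity element of the subgroup $G$; it is an idempotent of $S$. Since $S$ is semitopological, the translations $\lambda_e\colon x\mapsto ex$ and $\rho_e\colon x\mapsto xe$ are continuous, so $eSe=\rho_e(\lambda_e(S))$ is a continuous image of the compact space $S$, hence a compact Hausdorff semitopological semigroup; moreover $e$ is a two-sided identity of $eSe$, because $e\cdot(exe)=exe=(exe)\cdot e$ for every $x\in S$. Thus $eSe$ is a compact semitopological \emph{monoid}. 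Every $g\in G$ lies in $eSe$ (as $g=ege$, using $eg=ge=g$) and is invertible there, with inverse $g^{-1}\in G\subseteq eSe$; hence $G$ is a subgroup of the maximal subgroup $H(e)$, i.e.\ the group of units, of $eSe$.

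First I would record these reductions precisely: that $eSe$ is compact, Hausdorff, and semitopological with identity $e$; that $H(e)$ is a subgroup of $S$ (the standard fact that the units of a monoid form a group); and that $G$ is a subgroup of $H(e)$. With this in hand, the lemma follows by invoking Theorem~4.6(i) of~\cite{Rup}, which asserts that the maximal subgroup $H(e)$ of a compact semitopological monoid, endowed with the subspace topology, is a topological group. Then $G$, being a subgroup of the topological group $H(e)$, is itself a topological group, as claimed. (Equivalently, one may route the argument through Ellis' theorem that a compact Hausdorff semitopological group is automatically a topological group, once one knows that a suitable compact closure containing $G$ is algebraically a group.)

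The main obstacle is entirely concentrated in that quoted step: upgrading the mere \emph{separate} continuity of the multiplication of $S$ to the \emph{joint} continuity of multiplication together with the continuity of inversion that a topological group requires. A naive net-chasing proof --- pick $g_\alpha\to t$ in $\overline{G}$, pass to a subnet with $g_\alpha^{-1}\to s$, and try to deduce $ts=st=e$ from $g_\alpha g_\alpha^{-1}=g_\alpha^{-1}g_\alpha=e$ --- fails, since separate continuity does not permit passing to the limit in both factors of a product at once. Surmounting this is precisely where the Ellis/Namioka joint-continuity machinery underlying~\cite[Theorem~4.6]{Rup} is indispensable, and I do not see an elementary way around it.
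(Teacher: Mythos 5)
Your proposal is correct and matches the paper's approach: the paper offers no argument beyond the citation ``Theorem~4.6(i) in~\cite{Rup} implies the following,'' and you simply make explicit the standard reduction (pass to the compact semitopological monoid $eSe$, observe $G$ sits inside its unit group $H(e)$, and quote Ruppert's theorem that $H(e)$ is a topological group). Your closing remark correctly locates the real content in the Ellis--Namioka joint-continuity machinery hidden in the cited theorem.
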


Remark that by Theorem 0.5 from~\cite{Rez}, a cancellative compact semitopological semigroup is a
topological group.

Pfister~\cite{Pfi} showed that each $T_3$ locally countably compact paratopological group is a
topological group. The following proposition generalizes this result a bit.

Let $\mathcal C$ be a class of spaces. A space is \emph{locally}
$\mathcal C$ if each its point has a closed neighborhood in the class $\mathcal C$.

\begin{proposition}\label{prop:cc-subgroup}
A subgroup $G$ of a locally countably compact $T_3$ topological semigroup $S$ with open
left shifts is a topological group.
\end{proposition}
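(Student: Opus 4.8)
The plan is to realise $G$ as a dense subgroup of a $T_3$, locally countably compact \emph{paratopological} group, and then to apply the theorem of Pfister~\cite{Pfi} that every such group is a topological group; the conclusion then follows at once, since every subgroup of a topological group carries, in the induced topology, a topological group structure, and the topology induced on $G$ from that auxiliary group is the subspace topology from $S$.

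First I would collect the relevant structure. Since the multiplication of $G$ is the restriction of the jointly continuous multiplication of $S$, the group $G$ is already a paratopological group, so it remains only to prove that the inversion of $G$ is continuous. Let $e$ be the identity of $G$. As the left shift $\lambda_e\colon x\mapsto ex$ of $S$ is continuous and open, $M:=eS$ is an open subsemigroup of $S$ containing $G$, on which $e$ acts as a left identity (if $x=es$, then $ex=(ee)s=x$); being open in the $T_3$ locally countably compact space $S$, the space $M$ is itself $T_3$ and locally countably compact. For each $g\in G$ the shift $\lambda_g$ maps $M$ into $M$ (using $ge=g$), is continuous, is open as a selfmap of $M$ (because $M$ is open in $S$), and is injective on $M$ (from $gx=gy$ with $x,y\in M$ one gets $x=ex=g^{-1}gx=g^{-1}gy=ey=y$); hence $\lambda_g|_M$ is a self-homeomorphism of $M$ with inverse $\lambda_{g^{-1}}|_M$.

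Next I would pass to $N:=\ol{G}^{S}$. Continuity of the multiplication makes $N$ a closed subsemigroup of $S$, and since $ge=eg=g$ for every $g\in G$, the equalities $he=eh=h$ hold for every $h\in N$ by continuity; hence $N\subseteq M$ and $e$ is a two-sided identity of the topological monoid $N$. As $N$ is closed in $M$, it is $T_3$ and locally countably compact, and for every $g\in G$ the homeomorphism $\lambda_g|_M$ maps $N$ onto $N$ (because $\lambda_g(\ol{G}^{S})\subseteq\ol{gG}^{S}=\ol{G}^{S}$, and $\lambda_g$, $\lambda_{g^{-1}}$ are mutually inverse on $M\supseteq N$), so $G$ is a dense subgroup of $N$ on which all the shifts $\lambda_g$, $g\in G$, are homeomorphisms. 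The key claim is then that $N$ is a group: granting it, $N$ is a $T_3$ locally countably compact paratopological group, hence a topological group by Pfister's theorem~\cite{Pfi}, and therefore so is its subgroup $G$.

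To prove the claim I would fix $h\in N$ and a net $(g_\alpha)$ in $G$ converging to $h$. If the net of inverses $(g_\alpha^{-1})$ has a cluster point $h'$ in $N$, then along a subnet with $g_\beta^{-1}\to h'$ joint continuity of the multiplication gives $hh'=\lim_\beta g_\beta g_\beta^{-1}=e$ and $h'h=\lim_\beta g_\beta^{-1}g_\beta=e$, so $h'=h^{-1}\in N$ and $h$ is invertible. The decisive point — and, I expect, the main obstacle — is to show that $(g_\alpha^{-1})$ cannot ``escape to infinity'', i.e. that it is eventually confined to some countably compact subset of $N$; this is precisely the phenomenon responsible for the failure of the Sorgenfrey line to be a topological group, and it cannot be ruled out for general paratopological groups. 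It is here that the two hypotheses must be used together: local countable compactness of $S$ supplies a closed countably compact neighbourhood $V$ of $e$ in $M$ (and one of $h$), and the open-left-shift hypothesis makes each $\lambda_{g_\alpha^{-1}}|_M$ a homeomorphism, so that $g_\alpha^{-1}V$ is a closed countably compact neighbourhood of $g_\alpha^{-1}$; combining this ``translation uniformity'' with the relation $g_\alpha^{-1}g_\alpha=e\in\int_M V$ and with $g_\alpha\to h$, one has to pin the net $(g_\alpha^{-1})$ into a single countably compact set, where countable compactness yields the required cluster point. I expect that making this confinement rigorous — and, should net arguments be awkward in the absence of first countability, replacing the reduction to Pfister's theorem by a direct adaptation of its proof to the present subgroup setting — is the only genuinely delicate part, the remainder being the bookkeeping above together with an appeal to Pfister's result.
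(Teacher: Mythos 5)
Your reduction framework (passing to $M=eS$, showing each $\lambda_g|_M$ is a self-homeomorphism, and identifying $N=\overline{G}^{\,S}$ as a closed topological monoid containing $G$ densely) is sound as far as it goes, but the proof has a genuine gap exactly where you flag it: you never establish that the net $(g_\alpha^{-1})$ has a cluster point, i.e.\ that $N$ is a group. That claim is not a technical loose end --- it \emph{is} the content of the proposition. Local countable compactness gives cluster points only for nets eventually contained in a single countably compact set, and the ``translation uniformity'' you invoke ($g_\alpha^{-1}V$ being a countably compact neighbourhood of $g_\alpha^{-1}$) does not by itself confine the net: this is precisely the mechanism by which inversion fails in the Sorgenfrey line, and no direct confinement argument is known; Pfister's own proof does not proceed this way. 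Worse, the statement ``$\overline{G}^{\,S}$ is a group'' is strictly stronger than what is being proved (continuity of inversion on $G$ says nothing about invertibility of limit points of $G$ in $S$), so the reduction may be aiming at a false or at least unprovable intermediate target.

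For comparison, the paper does not pass to $\overline{G}$ or quote Pfister as a black box; it adapts Pfister's argument directly to the subgroup $G$. Assuming inversion is discontinuous at $e$, one uses $T_3$ and local countable compactness to build neighbourhoods $U_i$ of $e$ with $\overline{U_0}$ countably compact inside the bad neighbourhood $U$ and $\overline{U_{i+1}^2}\subset U_i$, picks $x_i\in U_i\cap G$ with $x_i^{-1}\notin U$, and forms the telescoping products $y_k=x_1\cdots x_k\in\overline{U_0}$. A cluster point $y$ of $(y_k)$ exists by countable compactness; openness of the left shift by $y$ makes $yU_1$ a neighbourhood of $y$, so some $y_{k-1}\in yU_1$, and then $x_k^{-1}=y_k^{-1}y_{k-1}\in y_k^{-1}yU_1\subset U_{k-1}U_1\subset U$, a contradiction. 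The key idea your sketch is missing is this telescoping-product device, which converts discontinuity of inversion into a sequence trapped in one countably compact set --- exactly the ``confinement'' you could not supply. If you want to salvage your write-up, replace the appeal to Pfister's theorem on $N$ by this direct argument at the identity of $G$; the $M$ and $N$ bookkeeping then becomes unnecessary.
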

\begin{proof} In order to prove that $G$ is a topological group
it suffices to show that the inversion map on $G$ is continuous at its identity $e$.
Suppose to the contrary that there exists a neighborhood $U$ of $e$ such that for
each neighborhood $V$ of $e$ there exists and element $g\in V\cap G$ with $g^{-1}\not\in U$.
Using that $S$ is a $T_3$ topological semigroup, inductively we can construct a
sequence $\{U_i\}$ of neighborhoods of $e$ such that $\overline{U_0}$ is a countable
compact subset of $U$ and $\overline{U_{i+1}^2}\subset U_i$
for each $i$. By the assumption, for each $i$ there exists a point $x_i\in U_i\cap G$
such that $x_i^{-1}\not\in U$. For each natural $k$ put $y_k=x_1\dots x_k\in U_1\dots U_k\subset
U_1^2\subset\overline{U_0}$. Let $y\in \overline{U_0}$ be a cluster point of the sequence $\{y_k\}$.
There exists $k>1$ such that $y_{k-1}\in yU_1$. Then $x_k^{-1}=y_k^{-1}y_{k-1}\in y_k^{-1}yU_1$.
Clearly, that $y^{-1}_ky$ is a cluster point of a sequence $\{y^{-1}_ky_j:j\in\mathbb N\}$.
Since, if $j>k$ then $y^{-1}_ky_j=x_{k+1}\cdots x_j\in U_{k+1}\cdots U_j\subset U_k$, we see that
$y^{-1}_ky\in\overline{U_k}\subset U_{k-1}$. So $x^{-1}_k\in y^{-1}_kyU_{1}\subset
U_{k-1}U_{1}\subset U_1U_1\subset U_0\subset U$, a contradiction with $x_k^{-1}\not\in U$.
\end{proof}

We recall that a group $G$ endowed with a topology is {\it left (resp. right) precompact},
if for each neighborhood $U$ of the identity of $G$ there exists a finite subset $F$ of $G$ such
that $FU=G$ (resp. $UF=G$). It is easy to check (see, for instance, \cite[Pr.3.1]{Rav3} or
\cite[Pr.2.1]{Rav2}) that a paratopological group $G$ is left precompact if and only if
$G$ is right precompact, so we shall call left precompact paratopological groups {\em precompact}. Moreover, it is well-known~\cite{AT} that a topological group
$G$ is precompact if and only if $G$ is a subgroup of a compact topological group. A result
from \cite{BGR} implies that a topological group $G$ is precompact if and only if
for any neighborhood $U$ of the identity of the group $G$ there exists a finite set
$F\subset G$ such that $G=FUF$.

\begin{proposition}\label{prop:lc-subgroup} A precompact subgroup $G$ of a
 locally compact semitopological semigroup $S$ is a topological group.
\end{proposition}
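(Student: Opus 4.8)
The plan is to reduce the statement to Lemma~\ref{lem:c-subgroup} by exhibiting $G$ as a subgroup of a \emph{compact} semitopological semigroup, the obvious candidate being $\overline{G}$, the closure of $G$ in $S$.

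First I would verify that $\overline{G}$ is a Hausdorff semitopological subsemigroup of $S$. This is a routine application of separate continuity: for $a\in G$ the left shift $x\mapsto ax$ is continuous, so $a\overline{G}\subseteq\overline{aG}\subseteq\overline{G}$; then for $b\in\overline{G}$ the right shift $x\mapsto xb$ is continuous, so $\overline{G}\,b\subseteq\overline{Gb}\subseteq\overline{G}$. Hence $\overline{G}\cdot\overline{G}\subseteq\overline{G}$, and $\overline{G}$ with the restricted operation and the subspace topology is a Hausdorff semitopological semigroup.

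The heart of the argument is to prove that $\overline{G}$ is compact, and this is where local compactness of $S$ meets precompactness of $G$. Using local compactness, I would fix an open neighbourhood $W$ of the identity $e$ of $G$ in $S$ with compact closure $\overline{W}$ (in $S$); then $U:=W\cap G$ is a neighbourhood of $e$ in $G$, and precompactness of $G$ supplies a finite set $F\subseteq G$ with $FU=G$. For each $f\in F$ the left shift $\lambda_f\colon S\to S$ is continuous, so $f\overline{W}=\lambda_f(\overline{W})$ is compact, hence closed in the Hausdorff space $S$. Since $fU\subseteq fW\subseteq f\overline{W}$, we obtain $G=FU\subseteq\bigcup_{f\in F}f\overline{W}$, a finite union of compact (hence closed) sets; this union is itself compact and closed, so $\overline{G}\subseteq\bigcup_{f\in F}f\overline{W}$ is a closed subset of a compact set and therefore compact. (If one prefers right precompactness, i.e.\ $UF=G$, the mirror-image computation with right shifts works identically, so the side is immaterial.)

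With $\overline{G}$ now a compact semitopological semigroup and $G$ a subgroup of it, Lemma~\ref{lem:c-subgroup} finishes the proof: $G$ is a topological group. I expect the single genuine obstacle to be the compactness step, where one must convert the combinatorial content of precompactness (finitely many translates of $U$ cover $G$) into honest compactness inside $S$; the mechanism is exactly that shifts in a semitopological semigroup are continuous, so they carry the one compact set $\overline{W}$ onto compact --- hence closed --- pieces whose finite union confines $\overline{G}$. Everything else is either separate-continuity bookkeeping or the citation to Lemma~\ref{lem:c-subgroup}.
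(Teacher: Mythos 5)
Your proof is correct and follows essentially the same route as the paper's: use precompactness to cover $G$ by finitely many shifts of a compact neighbourhood, conclude that $\overline{G}$ is a compact semitopological semigroup, and invoke Lemma~\ref{lem:c-subgroup}. The only difference is cosmetic --- you work with a compact closure $\overline{W}$ taken in $S$, while the paper takes a compact neighbourhood inside the (locally compact) closed subspace $\overline{G}$.
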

\begin{proof} We have that $\overline{G}$ is a locally compact semitopological semigroup.
Let $U$ be a compact neighborhood (in $\overline{G}$) of the identity of the
group $G$. Since the group $G$ is precompact, there exists a finite subset $F$ of the
group $G$ such that $G=F(U\cap G)$. Thus $FU\supset G$ and $FU$
is a closed subset of $\overline{G}$, so $FU=\overline{G}$ and the set $\overline{G}$ is compact.
Then by Lemma~\ref{lem:c-subgroup}, $G$ is a topological group.
\end{proof}

\begin{example}\label{ex:cp-ptg-not-tg}
In~\cite[Ex. 3]{BR2} it is constructed
a monothetic second countable paratopological group $G$ such that each power of $G$
is countably pracompact but $G$ is not a topological group.

Proposition~\ref{prop:lc-subgroup} does not extend to
first countable countably pracompact paratopological group $S$. Our example  will
be based on the group from~\cite[Example 3]{BR2} and the embedding described at the end
of the proof of~\cite[Theorem 3]{BR}. But for brevity and clarity we describe the
full construction here. Let $S=\mathbb T\times \mathbb T$, $\tau$ be the usual topology on $S$,
and $s:\mathbb T\to\mathbb Q$ be a (discontinuous) homomorphism onto the additive group of rational numbers.
For each natural $n$ put
$$
\begin{aligned}
V_n&=\{(g,h)\in\mathbb T\times\mathbb T:\max\{|\arg g|, |\arg h|\}<1/n\}\in\tau,\\
U_n&=\{(g,1)\in\mathbb T\times\mathbb T:0\le\arg g<1/n\}\cup \{(g,h)\in V_n:s(h)>0\}.
\end{aligned}
$$

Observing that $\{1\}\cup\{h\in\mathbb T:s(h)>0\}$ is a semigroup, it is easy to check that
the family $\{U_n\}$ satisfies Pontryagin conditions
(see,~\cite[Proposition 1.1]{Rav1}) and it is a base at the identity of a first countable
semigroup topology $\sigma$ on the group $S$.

The subgroup $G=\mathbb T\times\{1\}$ of $S$ is naturally topologically isomorphic to
Sorgenfrey circle (see Example~\ref{exam:Sorgenfrey}), so it is precompact.

We claim that the space $(S,\sigma)$ is countably pracompact.
To show this choose an element $b=e^{\psi i}\in\mathbb T$ such that $s(b)>0$. Also pick
$\varphi\in\mathbb R\setminus\psi\mathbb Q$ and put $a=e^{\varphi i}$. Put
$A=\{n(a,b):n\in\mathbb N\}\subset S$.
Example 65 from~\cite{Pon} implies that the set $A$ is dense in $(S,\tau).$
Let $U\in\sigma$ be any non-empty set. The definition of the set $U_n$
implies that there exists a non-empty set $V\in\tau$ and a number $k\in N$
such that $\{(g,h)\in V: s(h)>k\}\subset U$. The density of
$A$ in $(D,\tau)$ implies that there exists $m\in N$ such that
$ms(b)>k$ and $m(a,b)\in V$. Then $m(a,b)\in U$. Thus the set $A$ is dense in $(S,\sigma)$.

Now let $B$ be any infinite subset of $A$. Let $(g,h)$ be an accumulation point of $B$
is $(S,\tau)$. Let $(g,h)+U_n\in\sigma$ be a basic neighborhood of $(g,h)$.
A set $B_n=((g,h)+V_n)\cap B$ is infinite. Since $s(b)>0$, the set
$C_n=\{(c,d)\in B_n: s(d)\le s(h)\}$ is finite. Since $B_n\setminus C_n\subset (g,h)+U_n$,
we conclude that $(g,h)$ is an accumulation point of $B$ is $(S,\sigma)$.
\end{example}

There exists a pseudocompact quasitopological group $G$ of period $2$,
which is not a paratopological group, see (\cite{Kor1},~\cite{Kor2}).
On the other hand, Reznichenko in~\cite[Theorem 2.5]{Rez} showed
that each semitopological group $G\in \mathcal N$ is a
topological group, where $\mathcal N$ is a family of all pseudocompact spaces $X$ such that
$(X,X)$ is a \emph{Grothendieck pair}, that is
if each continuous image of $X$ in $C_p(Y)$ has the compact closure in $C_p(Y)$.
In particular, a pseudocompact space $X$ belongs to $\mathcal N$ provided
$X$ has one of the following properties: countable compactness, countable tightness,
separability, $X$ is a $k$-space, see~\cite{Rez}.

We can prove a straightforward generalization of Pontryagin's theorem for compact-like groups.

\begin{proposition}\label{prop:local-to-global} Let $\mathcal C$ be a class of feebly compact
spaces closed with respect to
homeomorphisms. Moreover, each space $X$ belongs to $\mathcal C$ provided there exists finitely
many its subspaces $X_1,\dots, X_n\in\mathcal C$ such that for any point $x\in X$ there exists
$X_i$ which is a neighborhood of $x$. Then each locally $\mathcal C$ monothetic
topological $G$ group is $\mathcal C$.
\end{proposition}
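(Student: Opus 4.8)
The plan is to reduce the statement to Pontryagin's theorem by first showing that $G$ is a \emph{topological group}, and then showing that $G$ (being locally $\mathcal C$, hence locally feebly compact) is actually a compact topological group, after which membership in $\mathcal C$ follows from the closure properties assumed of $\mathcal C$. So the argument splits naturally into three stages.

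First I would verify that a monothetic topological monoid which is locally feebly compact is cancellative, and then a topological group. Here is the intended route: $G$ is locally $\mathcal C$, so the identity $1$ has a closed neighborhood $N\in\mathcal C$, which is feebly compact; being a topological monoid, feeble compactness of $N$ should let us locate, for any neighborhood $U$ of $1$, cluster points of the sequence of powers of the generator inside $N$, which in turn forces the generator (and hence, by monotheticity, every element) to be topologically periodic — this is the analogue of Proposition~\ref{prop:non-viscous-cc-stm-is-top-per} but with feeble compactness replacing countable compactness and using that a locally feebly compact topologized monoid contains enough powers of $p$ in any neighborhood of $1$. By Proposition~\ref{prop:top-per-stm-is-cancel}, a topologically periodic semitopological monoid is cancellative; and a topologically periodic monoid in which the generator has an inverse is a group — here one uses that in the closure $H$ of $\{p^m:m\ge n\}$ the inversion is available because a feebly compact cancellative topological semigroup containing a dense group is a topological group (Lemma~\ref{lem:c-subgroup} gives this in the compact case, and the local version should propagate). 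At this point $G$ is a monothetic topological group.

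Second, once $G$ is known to be a locally feebly compact topological group, I would invoke the closure hypothesis on $\mathcal C$ together with the homogeneity of $G$: since $1$ has a closed neighborhood $N_1\in\mathcal C$, translation by group elements gives, for each $x\in G$, a closed neighborhood $xN_1\in\mathcal C$ of $x$ (here $\mathcal C$ is closed under homeomorphisms and left translation is a homeomorphism). If $G$ were compact this would already exhibit $G\in\mathcal C$ with a single subspace. To get compactness, note that a locally compact-like topological group is locally precompact, hence precompact by monotheticity (the closure of the cyclic subgroup is dense, and a dense topologically periodic cyclic subgroup of a locally feebly compact group forces total boundedness of any neighborhood of $1$); a precompact topological group that is locally feebly compact is feebly compact, and a feebly compact topological group is pseudocompact, hence (being also precompact, so embeddable in a compact group, and closed there because complete) compact. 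Therefore $G$ is compact.

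Third and finally, with $G$ compact I can cover $G$ by finitely many translates $x_1N_1,\dots,x_nN_1$ of the closed $\mathcal C$-neighborhood $N_1$ of $1$; each $x_iN_1\in\mathcal C$, and every point of $G$ has one of these as a neighborhood, so the second hypothesis on $\mathcal C$ yields $G\in\mathcal C$, completing the proof. The main obstacle I expect is the first stage: extracting topological periodicity of the generator from \emph{feeble} compactness of a mere neighborhood of $1$, since feeble compactness is much weaker than countable compactness (it only controls locally finite families of open sets, not accumulation points of sequences), so the cluster-point argument used for countably compact monoids has to be replaced by an argument that a locally finite family of translates of a basic neighborhood of $1$ inside $N$ cannot be infinite — this is where the topological-semigroup structure and the density of the powers of $p$ must be combined carefully.
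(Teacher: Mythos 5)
The statement already assumes that $G$ is a topological \emph{group} (read ``monothetic topological group $G$'' despite the typo), so your entire first stage --- deriving cancellativity and the group structure from the monoid axioms --- is not needed; and the route you sketch for it would not work anyway, since, as you yourself flag, feeble compactness gives no control over accumulation points of sequences, so the cluster-point argument of Proposition~\ref{prop:non-viscous-cc-stm-is-top-per} has no feebly compact analogue. The genuine error is in your second stage. A feebly compact (indeed pseudocompact, indeed countably compact) topological group is precompact, but it is in general only \emph{dense} in its Ra\v\i kov completion, not closed: ``closed there because complete'' is false, because feebly compact groups need not be complete. Consequently your intermediate conclusion that $G$ is compact is false, and the proposition does not assert it. For instance, the subgroup of $\mathbb T^{\mathfrak c}$ generated by the $\Sigma$-product together with a topological generator of $\mathbb T^{\mathfrak c}$ is a proper dense countably pracompact (hence locally feebly compact) monothetic topological group which is not compact.

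What the final covering step actually requires is only \emph{precompactness} of $G$: then $G=FU$ for a finite set $F$ and a neighborhood $U$ of the identity with $\overline U\in\mathcal C$, the translates $f\overline U$ ($f\in F$) lie in $\mathcal C$ by homeomorphism-invariance, every point has one of them as a neighborhood, and the second hypothesis on $\mathcal C$ gives $G\in\mathcal C$ --- this part of your argument coincides with the paper's. But your justification of precompactness (``a dense topologically periodic cyclic subgroup \dots forces total boundedness of any neighborhood of $1$'') is an assertion, not a proof; note in particular that the subgroup generated by a precompact neighborhood of the identity need not be precompact. The paper's (and the missing) idea is to pass to the Ra\v\i kov completion: by \cite[Theorem 6.9.27]{AT}, a topological group with a feebly compact closed neighborhood of the identity has locally compact completion $\hat G$; since $\hat G$ is monothetic, Pontryagin's theorem makes $\hat G$ compact, and then $G$ is precompact as a dense subgroup of a compact group. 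If you replace your compactness claim by this completion argument and delete the first stage, the proof closes.
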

\begin{proof} Theorem 6.9.27 from~\cite{AT} implies that Ra\v\i kov completion
$\hat G$ of $G$
is a locally compact topological group. Since $\hat G$ is monothetic, it is compact by
Pontryagin's theorem. The group $G$ is precompact as a subgroup of a precompact topological group by
Proposition 3.7.4 from~\cite{AT}. Since the group $G$ is locally $\mathcal C$,
there exists an open neighborhood $U$ of the identity $e$ of $G$ such that $\overline{U}\in\mathcal
C$. Since the group $G$ is precompact, there exists a finite subset $F$ of $G$ such that
$G=FU$. Now the property of the class $\mathcal C$ implies $G\in\mathcal C$.
\end{proof}


\begin{thebibliography}{}

\bibitem[AT]{AT}
A. Arhangel'skii, M. Tkachenko,
{\it Topological groups and related structures},
Atlantis Studies in Mathematics, Atlantis Press, Paris;
World Scientific Publishing Co. Pte. Ltd., Hackensack, NJ, 2008.

\bibitem[Ave1]{Ave1}
B. Averbukh,
{\it On unitary Cauchy filters on topological monoids},
Top. Algebra Appl. {\bf 1} (2013), 46--59.

\bibitem[Ave2]{Ave2}
B. Averbukh,
{\it On finest unitary extensions of topological monoids},
Top. Algebra Appl. {\bf 3} (2015), 1--10.

\bibitem[Ave3]{Ave3}
B. Averbukh,
{\it On unitary extensions and unitary completions of topological monoids},
Top. Algebra Appl. {\bf 4} (2016), 18--30.

\bibitem[Ave4]{Ave4}
B. Averbukh,
{\it A criterion of the existence of an embedding of a monothetic monoid into a topological group}, Top. Algebra Appl. (to appear).

\bibitem[Ban]{Ban}
T. Banakh,
{\it A homogeneous first-countable zero-dimensional compactum failing to be a left-topological group},
Matematychni Studii, {\bf 29} (2008) 215--217,
\url{http://matstud.org.ua/texts/2008/29_2/29_2_215_217.pdf}

\bibitem[BDG]{BDG}
T. Banakh, S. Dimitrova, O. Gutik,
{\it Embedding the bicyclic semigroup into countably compact topological semigroups},
Topology Appl. {\bf 157}:18 (2010) 2803--2814,
\url{http://arxiv.org/abs/0811.4276}

\bibitem[BGR]{BGR}
T. Banakh, I. Guran, A. Ravsky,
{\it Boundedness and separability in topological groups},
(in preparation).

\bibitem[BR]{BR}
T. Banakh, A. Ravsky,
{\it On subgroups of saturated or totally bounded paratopological groups},
Algebra and Discrete Mathematics {\bf 4} (2003), 1-20.
\url{http://arxiv.org/abs/1003.5355}

\bibitem[BR2]{BR2}
T. Banakh, A. Ravsky,
{\it Feebly compact paratopological groups},
\url{https://arxiv.org/abs/1003.5343v5}

\bibitem[BGR2]{BGR2}
S. Bardyla, O. Gutik, A. Ravsky,
{\it H-closed quasitopological groups}
Topology Appl. {\bf 217} (2017), 51--58.
\url{http://arxiv.org/abs/1506.08320}

\bibitem[Bla]{Blass} A.~Blass,
{\it Combinatorial cardinal characteristics of the continuum},
Handbook of set theory. Vols. 1, 2, 3, 395--489, Springer, Dordrecht, 2010.

\bibitem[BG]{BG}
B. Bokalo, I. Guran,
{\it Sequentially compact Hausdorff cancellative semigroup is a topological group},
Matematychni Studii {\bf 6} (1996), 39--40.
\url{http://matstud.org.ua/texts/1996/6/6_039-040.pdf}

\bibitem[CHK]{CHK}
J.H. Carruth, J.A. Hildebrant, R.J. Koch,
{\it The theory of topological semigroups},
Pure and Applied Mathematics, Marcel Dekker, Inc., 1983, vi + 244 pp.


\bibitem[CP]{CP}
A.H.~Clifford, G.B.~Preston,
{\it The Algebraic Theory of Semigroups}, Vol. I
Providence, R.I.,  1964 (Russian translation, Moskow, Mir, 1972).

\bibitem[DS]{DS}
D. Dikranjan, D. Shakhmatov,
{\it Selected topics in the structure theory of topological groups},
in: Open Problems in  Topology, II (E. Pearl ed.), Elsevier, 2007, p.389--406.

\bibitem[D-AS]{Dorantes-AldamaShakhmatov}
A. Dorantes-Aldama, D. Shakhmatov,
{\it Selective  sequential  pseudocompactness},
Top. Appl. \textbf{222} (2017), 53--69.


\bibitem[vDRRT]{vanDouwenReedRoscoeTree1991}
E. van Douwen, G. Reed, A. Roscoe, I.Tree,
{\it Star covering properties}
Top. Appl. \textbf{39}:1 (1991), 71--103.

\bibitem[Ell]{Ell}
R. Ellis,
{\it  Continuty and homeomorphism groups},
Proc. Amer. Math. Soc. {\bf 4} (1953), 969--973.

\bibitem[Ell2]{Ell2}
R. Ellis,
{\it A note on the continuity of the inverse},
Proc. Amer. Math. Soc. {\bf 8} (1957), 372--373.

\bibitem[Eng]{Eng}
R. Engelking,
{\it General Topology},
Heldermann Verlag, Berlin, 1989.

\bibitem[GK]{GK}
I. Guran, M. Kisil',
{\it Pontryagin's alternative for locally compact cancellative monoids},
Visnyk of the Lviv Univ. Series Mech. Math. No.77 (2012), 84--88, (in Ukrainian),
\url{http://prima.lnu.edu.ua/faculty/mechmat/Departments/MathVisnykLU/VLUsMath-77/VisnM-77-084.pdf}

O. Gutik, A. Ravsky,
{\it On feebly compact inverse primitive (semi)topological semigroups},
Matematychni Studii. {\bf 44}:1 (2015), 3-26,
\url{http://matstud.org.ua/texts/2015/44_1/3-26.html}

\bibitem[GR2]{GR2}
O. Gutik, A. Ravsky,
{\it On old and new classes of feebly compact spaces},
Visnyk Lviv Univ. Ser. Mech. Mat. {\bf 85} (20189), (to appear)

\bibitem[Hew]{Hew}
E. Hewitt,
{\it Compact monothetic semigroups},
Duke Math. J., {\bf 23} (1956), 447--457,
\url{https://projecteuclid.org/euclid.dmj/1077466957}

P. Kenderov, I. Kortezov, W. Moors,
{\it Topological games and topological groups}
Topology Appl. {\bf 109}:2 (2001), 157--165,
\url{https://www.sciencedirect.com/science/article/pii/S0166864199001522}

\bibitem[Koc]{Koc}
R. Koch,
{\it On monothetic semigroups},
Proc. Amer. Math. Soc., {\bf 8} (1957), 397--401.

\bibitem[Kor1]{Kor1}
A. Korovin,
{\it Continuous actions of Abelian groups and topological properties in $C_p$-theory},
Ph.D. Thesis, Moskow State University, Moskow (1990) (in Russian).

\bibitem[Kor2]{Kor2}
A. Korovin,
{\it Continuous actions of pseudocompact groups and the topological group axioms},
Deposited in VINITI, \#3734-D, Moskow (1990) (in Russian).

\bibitem[KTW]{KTW}
P. Koszmider, A. Tomita, S. Watson,
{\it Forcing countably compact group topologies on a larger free Abelian group},
Topology Proc. {\bf 25} (2000), 563--574.


\bibitem[Kur]{Kur}
A. Kurosh,
{\it Group theory},
M.:Nauka, 1967, (in Russian).

\bibitem[LLZ]{LLZ}
K. Lau, J. Lawson, W. Zeng,
{\it Embedding Locally Compact Semigroups into Groups},
Semigroup Forum. {\bf 56} (1998), 151--156.

\bibitem[Law]{Law}
J. Lawson,
{\it Embedding semigroups into Lie groups},
The Analytical and Topological Theory of Semigroups: Trends and Developments,
editors K. H. Hofmann, J. D. Lawson, J. S. Pym. Walter de Gruyter, 1990.

\bibitem[Lip]{Lipparini2016}
P. Lipparini,
{\it The equivalence of two definitions of sequential pseudocompactness},
Appl. Gen. Topol. \textbf{17}:1 (2016), 1--5.
(long version \url{http://arxiv.org/abs/1201.4832})

\bibitem[MaT]{MT}
R. Madariaga-Garcia, A. Tomita,
{\it Countably compact topological group topologies on free Abelian groups from selective ultrafilters},
Topology Appl. {\bf 154} (2007), 1470--1480.

\bibitem[Mat]{Matveev1998} M.~Matveev,
{\it A Survey on Star Covering Properties},
preprint; (available at \url{http://at.yorku.ca/v/a/a/a/19.htm}).




\bibitem[McKil]{McKil}
S. McKilligan,
{\it Embedding topological semigroups in topological groups},
Proc. Edinburgh Math. Soc. {\bf 17} (1970/71), 127--138.


\bibitem[Pfi]{Pfi}
H. Pfister,
{\it Continuity of the inverse},
Proc. Amer. Math. Soc. {\bf 95} (1985), 312--314.

\bibitem[Pon]{Pon}
L. Pontrjagin,
{\it Continuous groups}, 2nd ed.,
M.: GITTL, (1954), (in Russian).

\bibitem[Pon2]{Pon2}
L. Pontrjagin,
{\it Continuous groups},
M.:Nauka, 1973, (in Russian).

\bibitem[Rav1]{Rav1}
A. Ravsky,
{\it Paratopological groups I},
Matematychni Studii. {\bf 16} (2001), 37--48.

\bibitem[Rav2]{Rav2}
A. Ravsky,
{\it Paratopological groups II},
Matematychni Studii. {\bf 17} (2002), 93--101.

\bibitem[Rav3]{Rav3}
A. Ravsky,
{\it The topological and algebraical properties of paratopological groups},
Ph.D. Thesis. Lviv University, (2002) (in Ukrainian).

\bibitem[Rez]{Rez}
E. Reznichenko,
{\it Extension of functions defined on products of pseudocompact spaces and continuity of the
inverse in pseudocompact groups},
Topology Appl. {\bf 59} (1994), 233--244,
\url{https://www.sciencedirect.com/science/article/pii/0166864194900213}

\bibitem[Rot]{Rot}
N. Rothman,
{\it Embedding of topological semigroups},
Math. Ann. {\bf 139} (1960), 197--203.

\bibitem[Rup]{Rup}
W. Ruppert,
{\it Compact Semitopological Semigroups: An Intrinsic Theory},
Lecture Notes in Math. {\bf 1079}, Springer-Verlag, 1984.

\bibitem[Ste]{StephensonJr1984}
R.~Stephenson, Jr,
{\it Initially $\kappa$-compact and related compact spaces}, in K. Kunen, J. E. Vaughan (eds.), Handbook of Set-Theoretic Topology, Elsevier, 1984,
P. 603--632.


\bibitem[Tka]{Tka}
M. Tkachenko,
{\it Countably compact and pseudocompact topologies on free Abelian groups},
Soviet Math. (Iz. VUZ) {\bf 34}:5 (1990), 79--86.


\bibitem[Tom]{Tom2}
A. Tomita,
{\it The Wallace Problem: a counterexample from $MA_{countable}$ and $p$-compactness},
Canadian Math. Bulletin, {\bf 39}:4 (1996), 486--498.

\bibitem[Vau]{VaughanHSTT}
J.~Vaughan,
{\it Countably compact and sequentially compact spaces}, in K. Kunen, J. E. Vaughan (eds.), Handbook of Set-Theoretic Topology, Elsevier, 1984,
P. 569--602.


\bibitem[Zel]{Zel}
E. Zelenyuk,
{\it To Pontrjagin alternative for topological semigroups},
Mat. zametki {\bf 44}:3 (1988), 402--403, (in Russian).

\bibitem[Zel2]{Zel2}
Y. Zelenyuk,
{\it A locally compact noncompact monothetic semigroup with identity},
Fund. Math., published online (DOI:10.4064/fm535-3-2018).


\end{thebibliography}
\end{document}